\documentclass[10pt]{amsart}

\usepackage{amsmath,comment}
\usepackage{amssymb}
\usepackage{enumerate}
\usepackage{amsthm}
\usepackage{enumitem}
\usepackage{hyperref}

\newcommand{\Fc}{\mathcal{F}}

\newcommand{\Zb}{\mathbb{Z}}
\newcommand{\Nb}{\mathbb{N}}

\newtheorem{theorem}{Theorem}[section]
\newtheorem{proposition}[theorem]{Proposition}

\newtheorem{corollary}[theorem]{Corollary}
\newtheorem{definition}[theorem]{Definition}
\newtheorem{lemma}[theorem]{Lemma}
\newtheorem{remark}[theorem]{Remark}
\newtheorem{example}[theorem]{Example}

\begin{document}

\title{Distributional chaos for composition operators on $L^{p}$-spaces}

\author[Shengnan He]{Shengnan He}
\address{School of Humanities and Fundamental Sciences, Shenzhen Institute of Information Technology, Shenzhen 518172, China.}
\email{heshnmath@sziit.edu.cn}

\author[Zongbin Yin]{Zongbin Yin}
\address{School of Mathematics and Systems Science, Guangdong Polytechnic Normal University, Guangzhou 510665, China.}
\keywords{composition operator, distributional chaos, $L^{p}$-space, backward shifts, forward shifts}
\email{yinzb\_math@163.com}
\begin{abstract}
In this paper, we investigate the distributional chaos of the composition operator \(T_{\varphi}:f\mapsto f\circ\varphi\) on \(L^{p}(X,\mathcal{B},\mu)\), \(1\leq p <\infty\). We provide a characterization and practical sufficient conditions on \(\varphi\) for \(T_{\varphi}\) to be distributionally chaotic. Furthermore, we show that the existence of a dense set of distributionally irregular vectors implies the existence of a dense distributionally chaotic set, without any additional condition. We also provide a useful criterion for densely distributional chaos. Moreover, we characterize the weight sequences that ensure distributional chaos for bilateral backward shifts, unilateral backward shifts, bilateral forward shifts, and unilateral forward shifts on the weighted \(\ell^{p}\)-spaces \(\ell^{p}(\Nb,v)\) and \(\ell^{p}(\Zb,v)\). As a consequence, we reveal the equivalence between distributional chaos and densely distributional chaos for backward shifts and forward shifts on \(\ell^{p}(\mathbb{Z},v)\) without any additional condition. Finally, we characterize the composition operator \(T_{\varphi}\) on \(L^{p}(\mathbb{T},\mathcal{B},\lambda)\) induced by an automorphism \(\varphi\) of the unit disk \(\mathbb{D}\). We show that \(T_{\varphi}\) is densely distributionally chaotic if and only if \(\varphi\) has no fixed point in \(\mathbb{D}\).

\end{abstract}
\maketitle

\section{Introduction}
During the last three decades, various chaotic behaviors of linear operators acting on infinite-dimensional Banach spaces have been observed and widely studied, including hypercyclicity, Devaney chaos, Li-Yorke chaos, and distributional chaos. For a good account of results concerning linear chaos, we refer the reader to \cite{Bayart, Bernardes2013, Bernardes2015, Bernardes2018, Bermudez2011, Peris} and the references therein.

Among the various properties related to chaos in the study of linear chaotic operators, distributional chaos plays a particularly significant role. The concept of distributional chaos was proposed by Schweizer and Smítal \cite{Schweizer1994} in 1994 to unify various concepts of chaos in interval maps. The earliest research on distributional chaos in linear systems appeared in 2006, when Oprocha \cite{Oprocha2006} introduced continuous linear operators exhibiting distributional chaos in infinite-dimensional spaces. In 2009, Martínez-Giménez et al. \cite{Martinez2009} proved that Devaney chaotic operators on Köthe sequence spaces are distributionally chaotic, and they also provided an example of a non-hypercyclic distributionally chaotic operator. In 2013, Bernardes et al. \cite{Bernardes2013} conducted a systematic study of distributional chaos in Fréchet spaces. They established equivalent conditions (DDC) for distributional chaos and proved that an operator \(T\) is distributionally chaotic if and only if there exists a distributionally irregular vector.

We recall the following important notions.

\begin{itemize}
\item The \emph{upper} and \emph{lower density} of a subset \(A \subset \mathbb{N}\) are defined as
\[
\overline{\operatorname{dens}}(A) := \limsup_{N \to \infty} \frac{\operatorname{card}\{1 \leq n \leq N \,:\, n \in A\}}{N}
\]
and
\[
\underline{\operatorname{dens}}(A) := \liminf_{N \to \infty} \frac{\operatorname{card}\{1 \leq n \leq N \,:\, n \in A\}}{N},
\]
respectively.

\item An operator \(T\) on a Banach space \(X\) is said to be \emph{distributionally chaotic} if there exist an uncountable set \(\Gamma \subset X\) and \(\varepsilon > 0\) such that for every \(\delta > 0\) and each pair of distinct points \(x, y \in \Gamma\), we have that
\[
F_{x,y}(\varepsilon) := \underline{\operatorname{dens}}\big(\{j \in \mathbb{N} : \|T^j x - T^j y\| < \varepsilon\}\big) = 0
\]
and
\[
F^*_{x,y}(\delta) := \overline{\operatorname{dens}}\big(\{j \in \mathbb{N} : \|T^j x - T^j y\| < \delta\}\big) = 1.
\]
In this case, the set \(\Gamma\) is called a \emph{distributionally} \(\varepsilon\)\emph{-scrambled set} and the pair \((x, y)\) a \emph{distributionally chaotic pair}. Moreover, \(T\) is \emph{densely distributionally chaotic} if the set \(\Gamma\) may be chosen to be dense in \(X\).

\item Let \(T\) be an operator on a Banach space \(X\) and \(x \in X\). The orbit of \(x\) is said to be \textit{distributionally near to 0} if there exists \(A \subseteq \mathbb{N}\) with
\[
\overline{\operatorname{dens}}(A) = 1 \quad \text{such that} \quad \lim_{n \in A} \|T^n x\| = 0.
\]
The orbit of \(x\) is said to be \textit{distributionally unbounded} if there exists \(B \subseteq \mathbb{N}\) with
\[
\overline{\operatorname{dens}}(B) = 1 \quad \text{such that} \quad \lim_{n \in B} \|T^n x\| = \infty.
\]
If the orbit of \(x\) is simultaneously \textit{distributionally near to 0} and \textit{distributionally unbounded}, then \(x\) is called a \textit{distributionally irregular vector} for \(T\).
\end{itemize}

In this paper, we focus on distributionally chaotic composition operators on \(L^{p}\)-spaces. Composition operators on \(L^{p}\)-spaces have been the object of intense study in recent years. The topological transitivity, topological mixing, Li-Yorke chaos, \(\mathcal{F}\)-transitivity, and d\(\mathcal{F}\)-transitivity properties of this class of operators have been investigated in \cite{Bayart2018, Bernardes, He2021}. 

Throughout this paper, \((X,\mathcal{B},\mu)\) denotes a \(\sigma\)-finite measure space with a positive measure \(\mu\). The space \(L^{p}(X,\mathcal{B},\mu)\), \(1\leq p <\infty\), denotes the space of complex \(p\)-integrable functions on \((X,\mathcal{B},\mu)\), equipped with the norm
\[
\|f\| = \left(\int_{X}|f(x)|^{p} \,d\mu\right)^{\frac{1}{p}}.
\]
An inducing mapping \(\varphi:X\to X\) is a measurable map such that there exists \(c>0\) with
\[
\mu\left(\varphi^{-1}(B)\right) \leq c \mu(B), \quad \forall B\in \mathcal{B}.
\]
This condition ensures the boundedness of the composition operator \(T_{\varphi}:f \mapsto f \circ \varphi\) on \(L^{p}(X,\mathcal{B},\mu)\), \(1\leq p <\infty\).

The aim of this paper is to provide characterizations of distributionally chaotic composition operators \(T_{\varphi}\) on \(L^{p}\)-spaces in terms of \(\varphi\) and certain measurable sets. The paper is organized as follows.

 In Section~2, we provide a characterization of distributionally chaotic composition operators \(T_{\varphi}\) on \(L^{p}\)-spaces. As a consequence, we derive that \(T_{\varphi}\) is distributionally chaotic on \(L^{p}(X,\mathcal{B},\mu)\) if and only if it is distributionally chaotic on \(L^{p'}(X,\mathcal{B},\mu)\) for any \(p, p' \in [1,\infty)\). We also provide a sufficient condition, easier to verify, for demonstrating distributional chaos of \(T_{\varphi}\). Furthermore, we show that the existence of a dense set of distributionally irregular vectors implies the existence of a dense distributionally chaotic set, without any additional condition, and we provide a useful criterion for densely distributional chaos.

In Section~3, we characterize the weight sequences ensuring distributional chaos for bilateral and unilateral backward and forward shifts on weighted \(\ell^{p}\)-spaces. As a consequence, we establish the equivalence between distributional chaos and densely distributional chaos for these shifts without additional conditions.

In Section~4, we characterize composition operators \(T_{\varphi}\) on \(L^{p}(\mathbb{T},\mathcal{B},\lambda)\) induced by automorphisms of the unit disk \(\mathbb{D}\), proving that \(T_{\varphi}\) is densely distributionally chaotic if and only if \(\varphi\) has no fixed point in \(\mathbb{D}\).
\section{Characterizations of distributionally chaotic composition operators on $L^{p}$-spaces}
In this section, we provide a characterization of distributional chaotic composition operators $T_{\varphi}$ on $L^{p}$-spaces in terms of $\varphi$ and certain measurable sets in Theorem \ref{dcsufandnec}. As a result, we derive  Corollary \ref{dclp} which states that the composition operator $T_{\varphi}$ on $L^{p}(X,\mathcal{B},\mu)$ is distributionally chaotic, if and only if it is distributionally chaotic on \(L^{p'}(X,\mathcal{B},\mu)\) for any $p,p'\in [1,\infty)$. As another result, we can obtain a sufficient condition in Theorem \ref{dcsuf}, which is more easy to validate, to demonstrate that the composition operator on $L^{p}$-spaces exhibits distributional chaos.

At first, let us recall the well-known Distributional Chaos Criterion (DCC) which was introduced in \cite{Bernardes2013}. In Theorem 12 \cite{Bernardes2013}, Bernardes Jr. et al. proved that an operator \( T \) satisfies (DCC) if and only if \( T \) is distributionally chaotic, if and only if \( T \) has a distributionally irregular vector. We slightly modify the original version of Distributional Chaos Criterion (DCC) as follows, which remains equivalent to distributional chaos. Since the proof of this equivalence follows the same reasoning as that provided in Theorem 12 \cite{Bernardes2013}, we omit the proof here for brevity.
\begin{definition}[Distributional Chaos Criterion (DCC)]\label{DCC}
An operator $T$ on a Banach space $X$ satisfies the \textit{Distributional Chaos Criterion (DCC)} if there exist sequences \((x_{i})_{i\in I}\) and \( y_k \in \text{span}\{x_i : i \in I\} \), where \(I\) is a countable set, such that:
\begin{enumerate}[label=(\alph*)]
    \item There exists \( A \subseteq \mathbb{N} \) with \( \overline{\text{dens}}(A) = 1 \) such that \( \lim_{n \in A} T^n x_k = 0 \) for all \( k\in  I\).
    \item \( \lim_{k \to \infty} y_k = 0 \), and there exist \( \varepsilon > 0 \) and an increasing sequence \( (N_k) \) in \( \mathbb{N} \) such that
    \[
    \text{card} \left\{ 1 \leq j \leq N_k : d(T^j y_k, 0) > \varepsilon \right\} \geq N_k \varepsilon
    \]
 for all \( k \in \mathbb{N} \).
\end{enumerate}
\end{definition}
\begin{lemma}[Theorem 12 \cite{Bernardes2013}]
If \( T \in B(X) \), then the following assertions are equivalent:
\begin{enumerate}[label=\rm{(\roman*)}]
    \item \( T \) satisfies (DCC);
    \item \( T \) has a distributionally irregular vector;
    \item \( T \) is distributionally chaotic;
    \item \( T \) admits a distributionally chaotic pair.
\end{enumerate}
\end{lemma}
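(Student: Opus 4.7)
The plan is to establish the four-way equivalence via the cycle (i) $\Rightarrow$ (ii) $\Rightarrow$ (iii) $\Rightarrow$ (iv) $\Rightarrow$ (i), closely following the scheme of Theorem 12 in \cite{Bernardes2013}, with only cosmetic changes to accommodate the fact that here the $y_k$ are allowed to lie in $\operatorname{span}\{x_i : i \in I\}$ rather than to belong to a fixed indexed family. Two implications are essentially free. The step (iii) $\Rightarrow$ (iv) is immediate: any two distinct points of an uncountable distributionally scrambled set form a distributionally chaotic pair. For (iv) $\Rightarrow$ (i), given a chaotic pair $(x,y)$ I would take $z := x - y$, set $I = \mathbb{N}$, $x_k = z$, and $y_k = z/k$; the condition $F^*_{x,y}(\delta) = 1$ for all $\delta > 0$ produces condition (a) by a diagonal choice of a density-$1$ set $A$ along which $T^n z \to 0$, while $F_{x,y}(\varepsilon) = 0$ gives a set of upper density $1$ on which $\|T^j z\| \geq \varepsilon$; a rescaling of $z$ inside the span then supplies the norm-lower-bound in (b), and an increasing choice of $N_k$ yields the cardinality estimate.

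For (ii) $\Rightarrow$ (iii), I would use the standard manufacture of an uncountable distributionally $\varepsilon$-scrambled set from a single distributionally irregular vector $z$. Concretely, choose strictly increasing blocks $I_1 < I_2 < \cdots$ of integers and, for each $0$-$1$ sequence $\alpha$, define $x_\alpha$ as an absolutely convergent series whose nonzero terms are appropriately scaled copies of $z$ supported in the blocks selected by $\alpha$. The density-$1$ witness sets coming from the irregularity of $z$ transfer to every difference $x_\alpha - x_\beta$ with $\alpha \ne \beta$, yielding an uncountable family with both required density conditions.

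The main obstacle is the implication (i) $\Rightarrow$ (ii), i.e.\ constructing a \emph{single} vector that is simultaneously distributionally near $0$ and distributionally unbounded from the two \emph{separate} DCC data. My plan here is to build $z = \sum_k \lambda_k y_{m_k}$ with a rapidly decaying sequence $\lambda_k \to 0$ and a very rapidly growing index sequence $m_k$. The decay of $\lambda_k$ and the continuity of $T^n$ on the common density-$1$ set $A$ from (a) ensure that $T^n z \to 0$ along $A$ (showing $z$ is distributionally near $0$). For distributional unboundedness, one uses the cardinality lower bound in (b) at the scales $N_{m_k}$: by choosing $m_k$ growing fast enough, the set of $j \le N_{m_k}$ on which $\|T^j y_{m_k}\|$ is large enough to dominate the tail of the series has density at least $\varepsilon$, and a Borel--Cantelli style diagonal amalgamation produces a single set of upper density $1$ on which $\|T^j z\| \to \infty$. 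The delicate compatibility issue is to ensure that amplifying the unboundedness across the scales $m_k$ does not destroy the near-zero behavior inherited from $A$; this is handled by using only tail-small perturbations at each scale and by exploiting that $A$ is shared by all $x_k$. Once $z$ is obtained, the cycle closes.
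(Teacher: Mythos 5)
Your cycle breaks at the two links that carry the real content of Theorem 12. First, in (iv) $\Rightarrow$ (i) the choice $y_k = z/k$ with $z = x-y$ cannot satisfy condition (b) of the DCC: the chaotic-pair hypothesis only gives a \emph{fixed} lower bound, namely a set of upper density $1$ on which $\|T^j z\| \geq \varepsilon$, and nothing prevents the orbit of $z$ from being bounded. Then $\|T^j y_k\| = \|T^j z\|/k$ exceeds a fixed threshold $\varepsilon' > 0$ only when $\|T^j z\| > k\varepsilon'$, a condition you have no control over; for large $k$ the relevant set can be empty, so no rescaling of $z$ alone witnesses (b). To make this step work you would already need to know that $T$ has a distributionally \emph{unbounded} orbit, which is essentially what the theorem is asserting. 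Second, the same problem undermines your (i) $\Rightarrow$ (ii) construction: condition (b) only guarantees $\|T^j y_{m_k}\| > \varepsilon$ (a constant) on a proportion $\varepsilon$ of $[1,N_{m_k}]$, so with coefficients $\lambda_k \to 0$ the $k$-th term of $z=\sum_k \lambda_k y_{m_k}$ is bounded below at those times only by $\lambda_k \varepsilon \to 0$; no diagonal amalgamation can turn that into $\|T^j z\|\to\infty$ on a set of upper density $1$. Even if you let the coefficients grow (compensating with $\|y_{m_k}\|\to 0$), the "good" times for index $k$ live in the window $[1,N_{m_k}]$ and need not meet the set $A$ on which the earlier terms (finite combinations of the $x_i$) are small, so cancellation is not excluded; handling this window/density mismatch is exactly where the published argument is delicate. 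In short, the passage from a vector that is distributionally near $0$ and distributionally $\varepsilon$-separated from $0$ (a \emph{semi-irregular} vector, which is all your data give) to a distributionally \emph{irregular} one requires a separate functional-analytic argument in \cite{Bernardes2013}, and it appears nowhere in your outline. Your steps (iii) $\Rightarrow$ (iv) and (ii) $\Rightarrow$ (iii) are fine (with an irregular vector $z$ one can even take the scrambled set $\{az : a\in[0,1]\}$ directly).

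For comparison: the paper does not reprove the statement at all. It only modifies the form of the DCC (indexing the $x_i$ by a countable set and taking $y_k\in\operatorname{span}\{x_i : i\in I\}$) and invokes Theorem 12 of \cite{Bernardes2013}, noting that the same reasoning applies. So the expected justification here is the citation together with the observation that the modification is harmless; if you do want a self-contained proof, you must import (or reprove) the semi-irregular-to-irregular step rather than attempt to get unboundedness by rescaling.
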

\begin{theorem}\label{dcsufandnec}
The composition operator $T_{\varphi}: L^{p}(X,\mathcal{B},\mu)\rightarrow L^{p}(X,\mathcal{B},\mu) $ is distributionally chaotic, if and only if there exist countable positive measurable subsets $\left\{B_{k,j}\right\}_{k\in \Nb,j\in S_{k}}$, where $S_{k}$ is a finite subset of $\Nb$ and $B_{k,j}\cap B_{k,j'}=\emptyset$, for any $k\in \Nb$ and $j\neq j'$,  satisfying the following conditions:
\begin{enumerate}[label=\rm{(\roman*)}]
\item there exists $A \subseteq \mathbb{N}$ with $\overline{\text{dens}}(A) = 1$ such that $\lim_{n \in A} \mu \left(\varphi^{-n}\left(B_{k,j}\right)\right) = 0$ for all $k\in \Nb,j\in S_{k}$;
\item there exist \( \varepsilon > 0 \) , an increasing sequence $(N_k)$ in $\mathbb{N}$ and countable non-zero complex numbers $(C_{k,j})_{k\in \Nb,j\in S_{k}}$ such that
    \[
    \text{card}\left\{1 \leq n \leq N_k :\frac{\sum_{j\in S_{k}}|C_{k,j}|\mu\left(\varphi^{-n}\left(B_{k,j}\right)\right)}{\sum_{j\in S_{k}}|C_{k,j}|\mu\left(B_{k,j}\right)}\geq k\right\} \geq N_k \varepsilon.
    \]
\end{enumerate}
\end{theorem}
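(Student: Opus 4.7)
The natural bridge is the Distributional Chaos Criterion (DCC) recalled just above the theorem: since $T_{\varphi}^n \chi_B = \chi_{\varphi^{-n}(B)}$ and hence $\|T_\varphi^n \chi_B\|^p = \mu(\varphi^{-n}(B))$, conditions (i) and (ii) should be exactly DCC(a) and DCC(b) written out for witnesses that are simple functions with pairwise disjoint supporting sets. I therefore plan to establish both implications by translating DCC witnesses in $L^p$ into (and out of) data of the form $(B_{k,j}, C_{k,j})$.

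For the sufficiency (``if'') direction, I would take $x_{(k,j)} := \chi_{B_{k,j}}$ indexed over $I = \{(k,j) : k \in \Nb,\, j \in S_k\}$ and set $y_k := \lambda_k \sum_{j \in S_k} |C_{k,j}|^{1/p} \chi_{B_{k,j}}$ with $\lambda_k > 0$ chosen so that $\|y_k\|^p = 1/k$. Disjointness of the $B_{k,j}$ (and hence of $\varphi^{-n}(B_{k,j})$ for each $n$) gives
\[
\|T_\varphi^n y_k\|^p \;=\; \frac{1}{k}\cdot\frac{\sum_{j \in S_k} |C_{k,j}|\, \mu(\varphi^{-n}(B_{k,j}))}{\sum_{j \in S_k} |C_{k,j}|\, \mu(B_{k,j})},
\]
so condition (ii) yields $\|T_\varphi^n y_k\| \geq 1$ on at least an $\varepsilon$-fraction of $n \leq N_k$, which is DCC(b) (with $\varepsilon$ replaced by $\min(\varepsilon, 1/2)$ to absorb the strict inequality), while condition (i) is precisely DCC(a) applied to each $\chi_{B_{k,j}}$.

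For the necessity (``only if'') direction, I would start from DCC witnesses $(x_i)_{i \in I}$ and $(y_k)$ for $T_\varphi$ and exploit that each $x_i \in L^p$ admits a disjoint $L^p$-convergent decomposition $x_i = \sum_m d_{i,m} \chi_{E_{i,m}}$. Because $\|T_\varphi^n x_i\|^p = \sum_m |d_{i,m}|^p \mu(\varphi^{-n}(E_{i,m}))$ is a sum of non-negative terms, DCC(a) for $x_i$ propagates \emph{exactly} (with the same $A$) to each $\chi_{E_{i,m}}$: $\mu(\varphi^{-n}(E_{i,m})) \to 0$ along $A$ for every $(i,m)$. Each $y_k$ is a finite combination of the $x_i$'s, hence an $L^p$-convergent combination of the $\chi_{E_{i,m}}$; for each $k$ I would truncate this expansion to a finite part $\tilde{y}_k$ satisfying $\|y_k - \tilde{y}_k\| < \varepsilon /(2 \max_{n \leq N_k}\|T_\varphi^n\|)$, which is legal because only finitely many bounded operators $T_\varphi^n$ are relevant for fixed $k$. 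This preserves DCC(b) with $\varepsilon/2$ in place of $\varepsilon$, and $\|\tilde{y}_k\| \to 0$. Passing to a subsequence so that $\|\tilde{y}_k\|^p \leq (\varepsilon/2)^p/k$, then replacing the finitely many $E_{i,m}$ appearing in $\tilde{y}_k$ by the atoms $\{B_{k,j}\}_{j \in S_k}$ of the finite algebra they generate and writing $\tilde{y}_k = \sum_{j \in S_k} c_{k,j} \chi_{B_{k,j}}$, I would set $C_{k,j} := |c_{k,j}|^p$. Then (i) follows from the inclusion $B_{k,j} \subseteq E_{i,m}$ for some $(i,m)$ together with the propagation above, while (ii) follows because the ratio in (ii) equals $\|T_\varphi^n \tilde{y}_k\|^p / \|\tilde{y}_k\|^p \geq (\varepsilon/2)^p / \|\tilde{y}_k\|^p \geq k$ on the set of $n$'s produced by DCC(b).

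The main obstacle I foresee is that a direct approximation of a DCC witness by simple functions cannot preserve DCC(a), because $\|T_\varphi^n\|$ is generally unbounded in $n$ for composition operators. The plan above dodges this asymmetrically: I do \emph{not} approximate the $x_i$'s at all, since their built-in disjoint $L^p$-decomposition transfers DCC(a) with no operator norms involved; approximation is used only on the $y_k$ side, where finiteness of $N_k$ for each fixed $k$ tames the operator norms. A secondary bookkeeping point is the choice $C_{k,j} := |c_{k,j}|^p$, which makes the ratio in condition (ii) coincide with the $L^p$-norm ratio $\|T_\varphi^n \tilde{y}_k\|^p / \|\tilde{y}_k\|^p$, letting the DCC(b) inequality convert into the ratio bound $\geq k$ after a single subsequential rescaling.
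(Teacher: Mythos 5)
Your sufficiency argument is essentially the paper's: indicator functions $x_{k,j}=\chi_{B_{k,j}}$ plus the normalized combination $y_k$ with $\Vert y_k\Vert^p=1/k$, and the disjointness identity $\Vert T_\varphi^n y_k\Vert^p=\tfrac1k\cdot\frac{\sum_j |C_{k,j}|\mu(\varphi^{-n}(B_{k,j}))}{\sum_j |C_{k,j}|\mu(B_{k,j})}$; that half is fine.

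The necessity half has a genuine gap at its very first step. You assert that an arbitrary DCC witness $x_i\in L^p$ ``admits a disjoint $L^p$-convergent decomposition $x_i=\sum_m d_{i,m}\chi_{E_{i,m}}$,'' and your whole transfer of DCC(a) to the sets $E_{i,m}$ rests on the exact identity $\Vert T_\varphi^n x_i\Vert^p=\sum_m |d_{i,m}|^p\mu(\varphi^{-n}(E_{i,m}))$. No such decomposition exists in general: a countable sum of scalar multiples of indicators of disjoint sets is a countably-valued function, whereas a generic element of $L^p$ (e.g.\ $f(x)=x$ on $[0,1]$) takes a continuum of values. So the ``exact propagation with the same $A$'' never gets started, and you have no argument that the DCC witnesses can be chosen countably-valued. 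Your own ``main obstacle'' paragraph correctly identifies why a naive approximation of the $x_i$ fails (the norms $\Vert T_\varphi^n\Vert$ are unbounded in $n$), but the dodge you propose is unavailable.

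The paper's proof supplies exactly the missing device: positivity of the composition operator. Starting from a distributionally irregular vector $f$ (equivalent to distributional chaos by Theorem 12 of Bernardes et al.), it chooses for each $k$ a \emph{simple} function $s_k=\sum_{j\in S_k}C_{k,j}^{1/p}\chi_{B_{k,j}}$ with the pointwise domination $|s_k|\le |f|$. Since $|T_\varphi^n s_k|=|s_k\circ\varphi^n|\le |f\circ\varphi^n|$ pointwise, one gets $\sum_j |C_{k,j}|\mu(\varphi^{-n}(B_{k,j}))=\Vert T_\varphi^n s_k\Vert^p\le \Vert T_\varphi^n f\Vert^p$ for \emph{every} $n$, so the decay along the density-one set $A$ transfers to each $B_{k,j}$ with no operator-norm control at all — this replaces your nonexistent decomposition. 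The distributional unboundedness is then handled exactly as in your $y_k$-truncation: for each fixed $k$ only the finitely many $n\le N_k$ matter, so $s_k$ can be taken close enough to $f$ that $\Vert T_\varphi^n s_k\Vert\ge \Vert f\Vert k^{1/p}\ge\Vert s_k\Vert k^{1/p}$ on a set of $n\le N_k$ of proportion $1-k^{-1}$, which is condition (ii) with $\varepsilon=1/2$. If you replace your decomposition step by this domination argument (applied to the irregular vector rather than to general DCC witnesses), the rest of your bookkeeping goes through.
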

\begin{proof}[Proof of Theorem \ref{dcsufandnec}]
We first prove the sufficiency. Denote 
\[
x_{k,j}:=\mathcal{X}_{B_{k,j}},k\in \Nb,j\in S_{k},
\]
\[z_{k}:=\sum_{j\in S_{k}}(C_{k,j})^{1/p}\mathcal{X}_{B_{k,j}},k\in \Nb,\]
and 
\[y_{k}:=\frac{z_{k}}{k^{1/p}\Vert z_{k}\Vert},k\in \Nb.\]

For $n$ in Condition (ii), we can see that 
\[\Vert T_{\varphi}^{n}y_{k}\Vert=\frac{1}{k}\left(\frac{\sum_{j\in S_{k}}|C_{j}|\mu\left(\varphi^{-n}\left(B_{k,j}\right)\right)}{\sum_{j\in S_{k}}|C_{j}|\mu\left(B_{k,j}\right)}\right)^{1/p}\geq 1.\]

Note that \((x_{k,j})_{k\in\Nb,j\in S_{k}}\) is countable and \( y_k \in \text{span}\{x_k : k \in I\} \). Then \(T_{\varphi}\) satisfies Distributional Chaos Criterion (Definition \ref{DCC}) for \((x_{k,j})\) and \((y_{k})\). 

Next we show the necessity. Suppose $T_{\varphi}$ is distributionally chaotic and let $f\in L^{p}(X,\mathcal{B},\mu)$ be a distributionally irregular vector for $T_{\varphi}$. There exist \( A,B \subseteq \mathbb{N} \) with \(\overline{\text{dens}}(A) = \overline{\text{dens}}(B) = 1\), such that
\[\lim_{n \in A} \|T_{\varphi}^{n}f\| = 0,\quad \lim_{n \in B} \|T_{\varphi}^{n}f\| = \infty.
\]

For any $k\in \Nb,$ we have 
\[
\overline{\text{dens}} \left\{ j \in \mathbb{N} :\|T_{\varphi}^{n}f\| \geq \|f\|k^{1/p} \right\} 
\geq \overline{\text{dens}}(B) = 1.
\]

Then we can choose $N_k\in \Nb$ such that
   \[
    \text{card}\left\{1 \leq n \leq N_k :\|T_{\varphi}^{n}f\| \geq \|f\|k^{1/p}\right\} \geq N_k \left(1 - k^{-1}\right).
    \] 

Since $f\in L^{p}(X,\mathcal{B},\mu)$, there exists a measurable simple function
\[s_{k}=\sum_{i\in S_{k}}C_{k,j}^{1/p}\mathcal{X}_{B_{k,j}}\text{ with }|s|\leq |f|, \] 
that approximates $f$, where $S_{k}$ is a finite set of $\Nb$, $\left\{C_{k,j}\right\}_{j\in S_{k}}\subset \mathbb{C}/\{0\}$ and $\left\{B_{k,j}\right\}_{j\in S_{k}}$ is a collection of pairwise disjoint measurable sets of finite positive measure, such that 
\begin{equation}\label{sf}
    \text{card}\left\{1 \leq n \leq N_k :\|T_{\varphi}^{n}s\| \geq \|f\|k^{1/p}\right\} \geq N_k \left(1 - k^{-1}\right).
\end{equation}

Now we deduce assertion (i) and assertion (ii) in the theorem.

Since \(s(x)\leq f(x)\) for any $x\in X,$ we have that 
\[\|T_{\varphi}^{n}s\|^{p}=\sum_{j\in S_{k}}|C_{k,j}|\mu\left(\varphi^{-n}\left(B_{k,j}\right)\right)\leq \|T_{\varphi}^{n}f\|^{p}.\]

Hence we can obtain 
\[\lim_{n \in A} \mu \left(\varphi^{-n}\left(B_{k,j}\right)\right) \leq \frac{1}{|C_{k,j}|}\lim_{n \in A} \|T_{\varphi}^{n}f\|^{p}= 0,\]
which shows assertion (i) holds true.

On the other hand, since \(\|s\|\leq \|f\|\), from inequality (\ref{sf}), we have that 
\begin{equation}\label{sp}
    \text{card}\left\{1 \leq n \leq N_k :\|T_{\varphi}^{n}s\| \geq \|s\|k^{1/p}\right\} \geq N_k \left(1 - k^{-1}\right).
\end{equation}

Note that 
\[\|s\|^{p}=\sum_{j\in S_{k}}|C_{k,j}|\mu\left(B_{k,j}\right),\|T_{\varphi}^{n}s\|^{p}=\sum_{j\in S_{k}}|C_{k,j}|\mu\left(\varphi^{-n}\left(B_{k,j}\right)\right).\]
Then we can deduce assertion (ii) in the theorem and complete the proof.
\end{proof}
By the above theorem, we can immediately derive the following conclusion.
\begin{corollary}\label{dclp}
The composition operator $T_{\varphi}: L^{p}(X,\mathcal{B},\mu)\rightarrow L^{p}(X,\mathcal{B},\mu) $ is distributionally chaotic, if and only if the composition operator $T_{\varphi}: L^{p'}(X,\mathcal{B},\mu)\rightarrow L^{p'}(X,\mathcal{B},\mu) $ is distributionally chaotic for any $p,p'\in [1,\infty).$
\end{corollary}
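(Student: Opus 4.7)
The plan is to observe that Theorem \ref{dcsufandnec} furnishes a characterization of distributional chaos of $T_{\varphi}$ that is entirely independent of the exponent $p$. Indeed, conditions (i) and (ii) in the statement of Theorem \ref{dcsufandnec} are phrased exclusively in terms of the inducing map $\varphi$, the measure $\mu$, a countable collection of pairwise disjoint measurable sets $\{B_{k,j}\}$, and scalars $C_{k,j} \in \mathbb{C} \setminus \{0\}$; the parameter $p$ does not appear anywhere in these conditions.

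Concretely, I would argue as follows. First, recall that the boundedness hypothesis $\mu(\varphi^{-1}(B)) \leq c\,\mu(B)$ on the inducing map $\varphi$ is $p$-independent, so $T_{\varphi}$ defines a bounded operator simultaneously on $L^{p}(X,\mathcal{B},\mu)$ and on $L^{p'}(X,\mathcal{B},\mu)$ for every $p,p' \in [1,\infty)$. Next, suppose $T_{\varphi}$ is distributionally chaotic on $L^{p}(X,\mathcal{B},\mu)$. By Theorem \ref{dcsufandnec}, there exist a family $\{B_{k,j}\}_{k\in\Nb,\,j\in S_{k}}$ of pairwise disjoint measurable sets of positive finite measure and scalars $(C_{k,j})$ satisfying conditions (i) and (ii). Since these two conditions make no reference to $p$, the very same data $(B_{k,j}, C_{k,j})$ also verify the sufficient condition of Theorem \ref{dcsufandnec} for the operator $T_{\varphi}$ acting on $L^{p'}(X,\mathcal{B},\mu)$. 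Hence $T_{\varphi}$ is distributionally chaotic on $L^{p'}(X,\mathcal{B},\mu)$, and by symmetry the two properties are equivalent.

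There is no substantive obstacle here — the corollary is an immediate consequence of the $p$-free form of the characterization established in Theorem \ref{dcsufandnec}. The only point worth flagging is conceptual rather than technical: it is precisely because the characterization was formulated in terms of the measures $\mu(\varphi^{-n}(B_{k,j}))$ (which arise naturally as $\|T_{\varphi}^{n}\mathcal{X}_{B_{k,j}}\|^{p}$ for any $p$, with the $p$-th root cancelling against the rescaling) that the dependence on $p$ disappears. Consequently the write-up can be kept to a couple of sentences, simply invoking Theorem \ref{dcsufandnec} in both directions.
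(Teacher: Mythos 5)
Your proposal is correct and coincides with the paper's own argument: the paper likewise deduces the corollary immediately from Theorem \ref{dcsufandnec}, whose conditions (i) and (ii) involve only $\varphi$, $\mu$, the sets $B_{k,j}$ and the scalars $C_{k,j}$, and hence are independent of the exponent $p$.
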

The condition (ii) in Theorem \ref{dcsufandnec} appears to be difficult to verify. Therefore, we present the following sufficient condition, which is more easy to  validate, to demonstrate that the composition operator on $L^{p}$-spaces exhibits distributional chaos.
\begin{proposition}\label{dcsuf}
The composition operator $T_{\varphi}: L^{p}(X,\mathcal{B},\mu)\rightarrow L^{p}(X,\mathcal{B},\mu) $ is distributionally  chaotic, if there is a non-empty collection $\left\{B_{k}\right\}_{k\in \Nb}$ of measurable sets of finite positive measure such that:
\begin{enumerate}[label=\rm{(\roman*)}]
\item there exists $A \subseteq \mathbb{N}$ with $\overline{\text{dens}}(A) = 1$ such that $\lim_{n \in A} \mu \left(\varphi^{-n}\left(B_{k}\right)\right) = 0$ for all $k\in \Nb$;
\item there exist \( \varepsilon > 0 \) and an increasing sequence $(N_k)$ in $\mathbb{N}$ such that
    \[
    \text{card}\left\{1 \leq n \leq N_k :\frac{\mu\left(\varphi^{-n}\left(B_{k}\right)\right)}{\mu\left(B_{k}\right)}\geq k\right\} \geq N_k \varepsilon .
    \]
\end{enumerate}
\end{proposition}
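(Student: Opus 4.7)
The plan is to deduce this proposition directly from Theorem \ref{dcsufandnec}, viewing the hypothesis as the ``singleton'' case of that characterization. Concretely, for each $k \in \Nb$ I will set $S_k = \{1\}$, $B_{k,1} := B_k$, and $C_{k,1} := 1$. With this choice the disjointness requirement in Theorem \ref{dcsufandnec} is vacuous (each $S_k$ has one element), and the finite positive measure assumption on $B_k$ guarantees that the collection $\{B_{k,j}\}_{k \in \Nb, j \in S_k}$ qualifies as a countable family of positive measurable subsets as required.

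Next I would verify the two conditions of Theorem \ref{dcsufandnec} under this substitution. Condition (i) of Theorem \ref{dcsufandnec} asks that there exist $A \subseteq \Nb$ with $\overline{\text{dens}}(A) = 1$ such that $\lim_{n \in A} \mu(\varphi^{-n}(B_{k,j})) = 0$ for all $k$ and $j \in S_k$; this is exactly hypothesis (i) of the proposition since $B_{k,1} = B_k$ and $S_k = \{1\}$. For condition (ii) of Theorem \ref{dcsufandnec}, the ratio
\[
\frac{\sum_{j \in S_k}|C_{k,j}|\mu\bigl(\varphi^{-n}(B_{k,j})\bigr)}{\sum_{j \in S_k}|C_{k,j}|\mu\bigl(B_{k,j}\bigr)}
\]
collapses, for $S_k = \{1\}$ and $C_{k,1} = 1$, to $\mu(\varphi^{-n}(B_k))/\mu(B_k)$, so hypothesis (ii) of the proposition with the same $\varepsilon$ and sequence $(N_k)$ yields exactly condition (ii) of Theorem \ref{dcsufandnec}.

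Having checked both conditions, Theorem \ref{dcsufandnec} applies and concludes that $T_\varphi$ is distributionally chaotic on $L^p(X,\mathcal{B},\mu)$. There is no real obstacle here beyond bookkeeping: the proposition is essentially a repackaging of the characterization with a single ``test function'' $\mathcal{X}_{B_k}$ instead of an arbitrary simple function, and the main work has already been done in Theorem \ref{dcsufandnec}. The only subtlety worth flagging in the write-up is that one must assume the $B_k$ have strictly positive measure (as stated) so that the denominator $\mu(B_k)$ in hypothesis (ii) is nonzero and the scaling $y_k := \mathcal{X}_{B_k}/(k^{1/p}\|\mathcal{X}_{B_k}\|)$ implicit in the proof of Theorem \ref{dcsufandnec} is well defined.
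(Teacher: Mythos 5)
Your proposal is correct and is essentially identical to the paper's own proof, which likewise deduces the proposition from Theorem \ref{dcsufandnec} by specializing to singleton index sets $S_k$, the sets $\{B_k\}$, and constants $C_{k,j}=1$ (the paper writes $S_k=\{k\}$ rather than $\{1\}$, an immaterial difference). No gap to report.
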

\begin{proof}[Proof of Proposition \ref{dcsuf}]
Note that condition $\rm(ii)$ in Propositon \ref{dcsuf} implies condition $\rm(ii)$ in Theorem \ref{dcsufandnec} by taking $S_{k}=\{k\}$, $\left\{B_{k,j}\right\}_{j\in S_{k}}=\{B_{k}\}$ and $C_{k,j}=1$.
\end{proof}
In Section 3 of \cite{Bernardes2013} and Section 6 of \cite{Bernardes}, the authors provide several sufficient conditions for densely distributionally chaotic operator $T$ on a Banach space, under which distributional chaos is equivalent to the densely distributional chaos. These sufficient conditions are based on a major premise: 
\begin{equation}\label{premise0}
T^n x \to 0 \quad \text{for all } x \in X_0,
\end{equation}
where \( X_0 \) is a dense subset of \( X \).
The method there is to construct a dense uniformly distributionally irregular manifold using the above condition (4). Naturally, this manifold is then a dense distributionally chaotic set. We recall the main theorem in Section 3 of \cite{Bernardes2013}.
\begin{theorem}[Theorem 15 \cite{Bernardes2013}]
Assume \( X \) is separable. Suppose that \( T \in B(X) \) satisfies
\[
T^n x \to 0 \quad \text{for all } x \in X_0,
\]
where \( X_0 \) is a dense subset of \( X \). Then the following assertions are equivalent:
\begin{enumerate}
    \item[(i)] \( T \) is distributionally chaotic;
    \item[(ii)] \( T \) is densely distributionally chaotic;
    \item[(iii)] \( T \) admits a dense uniformly distributionally irregular manifold;
    \item[(iv)] \( T \) admits a distributionally unbounded orbit.
\end{enumerate}
\end{theorem}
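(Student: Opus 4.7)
The plan is to close the cycle (ii)$\Rightarrow$(i), (iii)$\Rightarrow$(ii), (i)$\Rightarrow$(iv), (iv)$\Rightarrow$(iii). The first two implications are essentially formal. (ii)$\Rightarrow$(i) is immediate from the definition. For (iii)$\Rightarrow$(ii), in a uniformly distributionally irregular manifold $M$ the difference $x-y$ of any two distinct elements again lies in $M$, so its orbit is distributionally near $0$ (yielding $F^{*}_{x,y}(\delta)=1$ for every $\delta>0$) and distributionally unbounded with a uniform $\vp$ (yielding $F_{x,y}(\vp)=0$); any uncountable subset of $M$ therefore serves as a dense distributionally $\vp$-scrambled set. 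For (i)$\Rightarrow$(iv), the lemma preceding the theorem guarantees a distributionally irregular vector, whose orbit is in particular distributionally unbounded.

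The substantive content is (iv)$\Rightarrow$(iii). Fix $x\in X$ whose orbit is distributionally unbounded, say $\|T^{n}x\|\to\infty$ along a set $B\subseteq\Nb$ of upper density one, and fix a countable dense sequence $(u_{j})\subseteq X_{0}$ so that $T^{n}u_{j}\to 0$ for each $j$. I would first produce a single distributionally irregular vector by combining $x$ with approximations in $X_{0}$. Pick $v_{k}\in X_{0}$ with $v_{k}\to x$ and, via a diagonal procedure, choose scalars $c_{k}$ and indices $n_{k}$ so that the series $y=\sum_{k}c_{k}(v_{k}-x)$ converges and its orbit simultaneously (a) reproduces the blow-up of $T^{n}x$ on a subset of $B$ of upper density one, because for large $n$ the terms $T^{n}v_{k}$ contribute only a small remainder, and (b) is small on another set of upper density one, obtained from the index ranges where the early $T^{n}v_{k}$ cancellations dominate and the tail of the series contributes negligibly.

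The remaining step upgrades a single distributionally irregular vector $z$ to a dense uniformly distributionally irregular manifold. Since $T^{n}u_{j}\to 0$ for every $j$, a sufficiently small perturbation $u_{j}+\lambda_{j}z$ is again close to $u_{j}$ (so the perturbed family remains dense), inherits distributional unboundedness from $z$, and remains distributionally near zero thanks to the hypothesis on $X_{0}$. Taking the $\mathbb{Q}+i\mathbb{Q}$-linear span of the perturbed vectors gives a dense manifold, and the plan is to choose the $\lambda_{j}$ recursively so that every non-trivial rational combination still satisfies the same $\vp$ in the distributional scrambling condition. The main obstacle is precisely this uniformity across all non-trivial combinations; I expect to handle it by a Baire-category argument, showing that inside the countable-dimensional dense subspace generated by $(u_{j})$ and $\{T^{k}z\}$ the set of uniformly distributionally irregular vectors (with a fixed $\vp$ after a diagonal rescaling) is residual, from which a dense manifold can be extracted by a standard enumeration.
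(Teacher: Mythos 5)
Since the paper only quotes this result from \cite{Bernardes2013} without reproducing its proof, I can only judge your sketch on its own merits. The routine implications are fine: (ii)$\Rightarrow$(i) is trivial, (i)$\Rightarrow$(iv) follows from Theorem 12 of \cite{Bernardes2013}, and (iii)$\Rightarrow$(ii) works once you take the manifold itself (dense and uncountable) as the scrambled set --- ``any uncountable subset'' is not enough, as it need not be dense. The first genuine gap is your construction of a distributionally irregular vector. If the $c_k$ are chosen so that the series converges and $\sum_k c_k T^n v_k \to 0$ as $n\to\infty$ (which is what ``the terms $T^n v_k$ contribute only a small remainder'' requires), then $T^n y=\sum_k c_k T^n v_k-\bigl(\sum_k c_k\bigr)T^n x$. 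If $\sum_k c_k=0$, the orbit of $y$ tends to $0$ and $y$ is not distributionally unbounded; if $\sum_k c_k\neq 0$, then $\|T^n y\|$ is small only where $\|T^n x\|$ is small, and distributional unboundedness of the orbit of $x$ gives no set of positive upper density on which $\|T^n x\|$ is small (it may tend to $\infty$ along all of $\mathbb{N}$). The idea that ``early cancellations dominate on some index ranges'' is not meaningful for a fixed vector $y$: its orbit does not depend on how you group the series. The near-zero half must come from Baire category, not from $x$: under the standing hypothesis the set of vectors with orbit distributionally near $0$ contains $X_0$, hence is residual (Proposition 9 of \cite{Bernardes2013}), and (iv) makes the set of distributionally unbounded vectors residual (Proposition 8); their intersection gives a residual set of distributionally irregular vectors. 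This is exactly the mechanism the present paper uses in Proposition \ref{dcpropdense}.

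The second, more serious gap is in (iv)$\Rightarrow$(iii) itself. A (uniformly) distributionally irregular manifold is a linear subspace over the scalar field, not a $\mathbb{Q}+i\mathbb{Q}$-module, and no choice of the $\lambda_j$ can make the span of $\{u_j+\lambda_j z\}$ work when a \emph{single} irregular vector $z$ is used: the combination $(u_1+\lambda_1 z)-\frac{\lambda_1}{\lambda_2}(u_2+\lambda_2 z)$ has no $z$-component, lies in $\operatorname{span}X_0$, and its orbit tends to $0$, so it is not distributionally irregular. (Taking the $\lambda_j$ linearly independent over $\mathbb{Q}(i)$ would save the rational span, but that is not a manifold.) Your closing appeal to Baire category does not repair this: residuality of irregular vectors does not allow one to ``extract'' a dense subspace, because a subspace is determined by all of its finite linear combinations, and these inevitably leave any given residual set. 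What is actually required is a recursive block construction with infinitely many independent sources of irregularity: vectors $z_j=y_j+w_j$ with $(y_j)$ dense in $X_0$ and the perturbations $w_j$ built from DCC-type data so that all of them (and hence all finite combinations) go to $0$ along one common set $A$ of upper density one, while their blow-up windows are staggered with rapidly increasing magnitudes so that in every nonzero finite complex combination some dominant term forces distributional unboundedness with a common $\varepsilon$. That uniform-domination construction is the heart of the theorem and is absent from your sketch.
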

However, in the following theorem, we will show that the existence of a dense set of distributionally irregular vectors implies the existence of a dense distributionally chaotic set.
\begin{theorem}
Let  \( T \) be an operator on a separable Banach space \( X \). Then the following assertions are equivalent:
\begin{enumerate}
    \item[(i)]  \( T \) is densely distributionally chaotic ;
    \item[(ii)] the set of distributionally irregular vectors is dense in  \( X \);
    \item[(iii)]  $T$ admits a distributionally unbounded orbit and the set of vectors with orbits distributionally near to 0 is dense in  \( X \).
\end{enumerate}
\end{theorem}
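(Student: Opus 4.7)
The plan is to establish the three equivalences via the cycle $(i) \Rightarrow (iii) \Rightarrow (ii) \Rightarrow (i)$, together with the trivial implication $(ii) \Rightarrow (iii)$, which is immediate since every distributionally irregular vector has an orbit that is both distributionally near to $0$ and distributionally unbounded.

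For $(i) \Rightarrow (iii)$, let $\Gamma$ be the dense distributionally $\varepsilon$-scrambled set and fix $x_{0} \in \Gamma$. The translate $\Gamma - x_{0}$ is dense, and each nonzero $z = y - x_{0}$ with $y \in \Gamma$ satisfies $\overline{\operatorname{dens}}(\{n : \|T^{n} z\| < \delta\}) = 1$ for every $\delta > 0$. A standard diagonal argument applied to the decreasing sets $E_{k} = \{n : \|T^{n} z\| < 1/k\}$, using a sufficiently fast-growing sequence $(N_{k})$ and setting $A_{z} = \bigcup_{k} (E_{k} \cap (N_{k-1}, N_{k}])$, yields a set of upper density $1$ on which $\|T^{n} z\| \to 0$; hence vectors with orbits distributionally near to $0$ form a dense set. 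The distributionally unbounded orbit is provided by the Bernardes lemma recalled in the excerpt, which guarantees a distributionally irregular vector whenever $T$ is distributionally chaotic.

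For $(iii) \Rightarrow (ii)$, given $w \in X$ and $\eta > 0$, the aim is to produce a distributionally irregular vector in the ball $B(w, \eta)$. Pick $d \in D$ with $\|d - w\| < \eta/2$ and its associated $A_{d}$ of upper density $1$ on which $\|T^{n} d\| \to 0$, and let $u$ be the distributionally unbounded vector with $B_{u}$ of upper density $1$. The desired $v$ is built as $v = d + \delta$, where $\delta$ is obtained as a convergent series of scaled iterates $c_{j} T^{n_{j}} u$ with $(c_{j})$ summably small so that $\|\delta\| < \eta/2$, and the time indices $(n_{j})$ chosen iteratively so that the partial sums drive $\|T^{n} v\|$ to infinity on an upper-density-$1$ subset of $B_{u}$ while their tail contributions remain negligible on $A_{d}$, preserving the near-zero behavior there. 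For $(ii) \Rightarrow (i)$, enumerate a countable dense subset $(u_{k}) \subseteq DI$; the desired dense distributionally scrambled set is assembled by a Cantor-type scheme, attaching to each branch a perturbation of some $u_{k}$ by scalar multiples of a fixed distributionally irregular vector, and exploiting distributional irregularity to guarantee a uniform $\varepsilon > 0$ together with the required density conditions on pairwise differences.

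The main technical obstacle throughout is that upper density $1$ is not preserved under intersection: two sets $A, B \subseteq \mathbb{N}$ with $\overline{\operatorname{dens}}(A) = \overline{\operatorname{dens}}(B) = 1$ may even be disjoint. Consequently, naive attempts to combine the near-$0$ behavior of one vector with the distributionally unbounded behavior of another fail outright. The remedy in both nontrivial constructions is an iterative scheme that produces a common upper-density-$1$ set as part of the construction, rather than trying to extract one from two given upper-density-$1$ sets after the fact.
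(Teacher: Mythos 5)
Your step $(i)\Rightarrow(iii)$ is fine (differences $y-x_0$ with $y$ in the dense scrambled set are distributionally near to $0$ via the nested-sets diagonal argument, and the quoted lemma of Bernardes et al.\ supplies a distributionally unbounded orbit), but the two substantive implications are not actually established. For $(iii)\Rightarrow(ii)$, your construction $v=d+\sum_j c_jT^{n_j}u$ never resolves the obstruction you yourself flag at the end: the sets $A_d$ (where $\|T^nd\|\to0$) and $B_u$ (where $\|T^nu\|\to\infty$) are fixed once $d$ and $u$ are fixed, and replacing $u$ by $T^{n_j}u$ only replaces $B_u$ by the shift $B_u-n_j$. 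Since the upper density $1$ of $B_u$ is witnessed only along some sequence of initial segments $[1,M_k]$, no choice of $n_j$ guarantees that $B_u-n_j$ has density close to $1$ inside the windows on which $A_d$ is dense; and on any candidate set of ``unboundedness times'' you must also exclude cancellation between $T^nd$ and the series terms (and among the terms themselves), for which nothing is offered. So no ``common upper-density-$1$ set produced by the construction'' materializes. The paper does not attempt such a construction: it invokes Propositions 8 and 9 of Bernardes et al.\ (2013) --- the vectors with distributionally unbounded orbit form a residual set as soon as one such orbit exists, and the vectors with orbit distributionally near to $0$ form a residual set as soon as they are dense --- and intersects two residual sets; Proposition 8 itself is proved by a Baire-category argument with finitely many perturbations $x+\lambda_iu$ and a pigeonhole count, not by a series. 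To keep your route you would essentially have to reprove that proposition, which your sketch does not do.

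For $(ii)\Rightarrow(i)$ the gap is of the same nature: perturbing a countable dense family $(u_k)$ of distributionally irregular vectors by scalar multiples of one fixed irregular vector $w$ only controls differences within a single coset $u_k+\mathbb{C}w$ (there $(\lambda-\mu)w$ is again irregular), while a cross-difference $u_k-u_l+(\lambda-\mu)w$ is completely uncontrolled: irregular vectors are not closed under addition, and irregularity of the summands yields no density information for the sum, again because upper-density-$1$ sets need not meet. This is precisely the difficulty that forced the earlier results to assume $T^nx\to0$ on a dense set, and it is what the paper's argument is designed to bypass: under (ii) the set of distributionally irregular vectors is a countable intersection of open dense sets, hence $\{(x,y)\in X\times X: x-y \text{ is distributionally irregular}\}$ is residual in $X\times X$, every such pair is a distributionally chaotic pair with a uniform $\varepsilon$, and Mycielski's theorem then produces a dense uncountable set $K$ with $K\times K\setminus\Delta_X$ inside this residual set, i.e.\ a dense scrambled set. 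Unless your Cantor scheme is replaced by such a category-plus-Mycielski argument (or by some genuinely new control of cross-differences), the implication $(ii)\Rightarrow(i)$ remains unproved.
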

\begin{proof}
The implications \((i)\Rightarrow (ii)\Rightarrow (iii)\) is trival. \((iii)\Rightarrow (ii)\) follows from Proposition 8 and Proposition 9 in \cite{Bernardes2013}. 

Now we show that \((ii)\Rightarrow (iii)\):Let \(X_{0}\) denote the set of vectors whose orbits are distributionally near to \(0\), and let \(Y_{0}\) represent the set of distributionally irregular vectors. To facilitate a detailed analysis of the properties of \(Y_{0}\), we express it as:
\[
Y_{0} = \bigcap_{k, N \in \mathbb{N}} \left( \bigcup_{m \geq N} M_{k,m} \cap \bigcup_{m \geq N} A_{k,m} \right),
\]
where
\[
M_{k,m} := \left\{ x \in X : \exists n \geq m \, \text{such that} \, 
\text{card} \left\{ 1 \leq j \leq n : \|T^j x\| < k^{-1} \right\} \geq n(1 - k^{-1}) \right\},
\]
and
\[
A_{k,m} := \left\{ x \in X : \exists n \geq m \, \text{such that} \, 
\text{card} \left\{ 1 \leq j \leq n : \|T^j x\| > k \right\} \geq n(1 - k^{-1}) \right\}.
\]
Note that each \(M_{k,m}\) and \(A_{k,m}\) is open and dense, since \(X_0\subset M_{k,m}\) and \(Y_{0}\subset A_{k,m}\). Then we have that \[U_{k,m}:=\bigcup_{m \geq N} M_{k,m} \cap \bigcup_{m \geq N} A_{k,m} \] is open and dense. So each set \(V_{k,m} := \{(x, y) \in X \times X : x - y \in U_{k,m}\}, \, j = 1, 2, \ldots\) is open and dense in \(X \times X\), and every point in \(\bigcap_{k, N \in \mathbb{N}}V_{k,m}\) is a distributionally chaotic pair for \(T\). Let \(R :=\bigcap_{k, N \in \mathbb{N}}V_{k,m}\). From Mycielski theorem in \cite{Mycielski}, there exists a dense Mycielski set (a Mycielski set is uncountable) \(K \subset X\) such that \(K \times K \setminus \Delta_X \subset R\). Then \(K\) is a dense distributionally chaotic set for \(T\).

\end{proof}
Next we give a sufficient condition on $\varphi$ for $T_{\varphi}$ to be densely distributionally chaotic, by which we can show that the composition operator $T_{\varphi}$ induced by an autormorphism $\varphi$ of the unit disk $\mathbb{D}$ on $L^{p}(\mathbb{T},d\lambda)$ is densely distributionally chaotic if and only if $\varphi$ has no fixed point in $\mathbb{D}$.
\begin{proposition}\label{dcpropdense}
Let $(X,\mathcal{B},\mu)$ be a measure space and $T_{\varphi}: L^{p}(X,\mathcal{B},\mu)\rightarrow L^{p}(X,\mathcal{B},\mu) $ be a composition operator. Suppose that there exist $A \subseteq \mathbb{N}$ with $\overline{\text{dens}}(A) = 1$ and a nonempty countable family $(C_{i})_{i\in \Nb}$ of measurable sets of finite positive measure such that $\mu\left(X\backslash\bigcup_{i\in \Nb}C_{i}\right)=0$ and that $\lim_{n \in A} \mu \left(\varphi^{-n}\left(C_{i}\right)\right) = 0$ for all $i\in \Nb$. Then the set of all vectors with orbits distributionally near to 0 is residual and the following assertions are equivalent:
\begin{enumerate}[label=\rm{(\roman*)}]
\item $T_{\varphi}$ is densely distributionally chaotic; 
\item $T_{\varphi}$ is distributionally chaotic;
\item $T_{\varphi}$ admits a distributionally unbounded orbit.
\item condition (ii) in Theorem \ref{dcsufandnec} holds.
\end{enumerate}
\end{proposition}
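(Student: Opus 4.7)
The plan is to derive all four equivalences from three ingredients: a Baire-category argument showing that under the hypothesis the set $X_{0}$ of vectors whose orbits are distributionally near to $0$ is residual, the preceding theorem characterizing dense distributional chaos in separable Banach spaces, and Theorem \ref{dcsufandnec}.

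First, I would produce a dense family lying in $X_{0}$. The simple functions $s=\sum_{i\in F}c_{i}\mathcal{X}_{B_{i}}$ with $F\subset \Nb$ finite and $B_{i}\subseteq C_{i}$ measurable are dense in $L^{p}(X,\mathcal{B},\mu)$ because $\mu\left(X\setminus \bigcup_{i}C_{i}\right)=0$, and any such $s$ belongs to $X_{0}$ since $\|T_{\varphi}^{n}s\|^{p}\leq \sum_{i\in F}|c_{i}|^{p}\mu(\varphi^{-n}(C_{i}))\to 0$ along $A$. Writing $X_{0}=\bigcap_{k,N}\bigcup_{m\geq N}M_{k,m}$ as in the proof of the preceding theorem, each $M_{k,m}$ is open (strict inequalities are preserved), so each $\bigcup_{m\geq N}M_{k,m}$ is open and contains this dense family, hence is open and dense; Baire's theorem then gives $X_{0}$ residual.

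Second, I would read off the equivalences (i)$\Leftrightarrow$(ii)$\Leftrightarrow$(iii): the chain (i)$\Rightarrow$(ii)$\Rightarrow$(iii) is trivial, and (iii)$\Rightarrow$(i) is the preceding theorem applied with the (now dense) $X_{0}$. The implication (ii)$\Rightarrow$(iv) is just the necessity direction of Theorem \ref{dcsufandnec}. The substantive step is (iv)$\Rightarrow$(ii). Given sets $B_{k,j}$ and scalars $C_{k,j},N_{k},\varepsilon$ realizing (iv), I plan to truncate $B_{k,j}$ to $B'_{k,j}:=B_{k,j}\cap \bigcup_{i\in F_{k}}C_{i}$ for a finite $F_{k}\subset \Nb$. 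Since $\mu\left(X\setminus \bigcup_{i}C_{i}\right)=0$ and the inequality $\mu(\varphi^{-1}(\cdot))\leq c\mu(\cdot)$ propagates null sets under each $\varphi^{-n}$, monotone convergence yields $\mu(B'_{k,j})\uparrow \mu(B_{k,j})$ and $\mu(\varphi^{-n}(B'_{k,j}))\uparrow \mu(\varphi^{-n}(B_{k,j}))$ as $F_{k}\uparrow \Nb$. Only finitely many pairs $(n,j)\in[1,N_{k}]\times S_{k}$ need to be controlled for each fixed $k$, so $F_{k}$ can be chosen so large that the ratio in (iv) decreases by at most a factor $1-1/k$; a reindexing $k\mapsto k+1$ (or equivalently, first passing to a subsequence where the original ratio exceeds $2k$) then turns the truncated data into a valid witness for condition (ii) of Theorem \ref{dcsufandnec}. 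Because $B'_{k,j}$ lies in a finite union of $C_{i}$'s, the hypothesis forces $\lim_{n\in A}\mu(\varphi^{-n}(B'_{k,j}))=0$, so condition (i) of Theorem \ref{dcsufandnec} also holds, and the theorem delivers distributional chaos.

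The main obstacle is precisely this truncation: we must shrink the sets $B_{k,j}$ of (iv) into subsets of finite unions of $C_{i}$'s while preserving the large-ratio inequality and the positive lower density $\varepsilon$ essentially intact. The boundedness assumption on $T_{\varphi}$ is what makes this possible, since it is what guarantees that $\mu\left(\varphi^{-n}\left(X\setminus\bigcup_{i}C_{i}\right)\right)=0$ for every $n$. Once the residual set $X_{0}$ is in hand, everything else is a direct amalgamation of the two prior theorems.
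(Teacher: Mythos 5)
Your proposal is correct, and its two main pillars coincide with the paper's: you prove density of $X_{0}=\{f:\lim_{n\in A}\|T_{\varphi}^{n}f\|=0\}$ using simple functions supported in finitely many of the $C_{i}$ (the paper uses the truncations $g_{S}$ of simple functions, same substance), and you get (iii)$\Rightarrow$(i) from the preceding theorem once the set of vectors with orbits distributionally near to $0$ is dense; the only cosmetic difference there is that you run the Baire argument with the open sets $M_{k,m}$ yourself, where the paper simply cites Proposition 9 of Bernardes et al. Where you genuinely diverge is the link with (iv): the paper closes that part by asserting (iii)$\Leftrightarrow$(iv) ``from the proof of Theorem \ref{dcsufandnec}'' (the necessity computation there only uses a distributionally unbounded vector, and the vectors $y_{k}$ built in the sufficiency part witness distributional unboundedness), whereas you close the cycle through (ii)$\Leftrightarrow$(iv), proving (iv)$\Rightarrow$(ii) by truncating the sets $B_{k,j}$ to $B_{k,j}\cap\bigcup_{i\in F_{k}}C_{i}$ and then invoking Theorem \ref{dcsufandnec} in full. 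Your truncation is sound: the inducing-map inequality gives $\mu\bigl(\varphi^{-n}\bigl(X\setminus\bigcup_{i}C_{i}\bigr)\bigr)=0$, continuity from below keeps the finitely many relevant quantities $\mu(\varphi^{-n}(B_{k,j}))$, $1\le n\le N_{k}$, $j\in S_{k}$, within a factor $1-\delta_{k}$ of their original values while only shrinking the denominator, a reindexing $k\mapsto 2k$ restores the threshold, and containment in a finite union of the $C_{i}$ delivers condition (i) of Theorem \ref{dcsufandnec} with the single set $A$ from the hypothesis. What your route buys is that no separate criterion for distributional unboundedness is needed---everything is funneled through Theorem \ref{dcsufandnec}---at the price of the extra measure-theoretic truncation; the paper's route is shorter because it recycles the computation already made in Theorem \ref{dcsufandnec} and ties (iv) directly to (iii). (Like the paper, you implicitly rely on separability of $L^{p}$ when invoking the preceding theorem, which is stated for separable spaces; that is not a defect relative to the paper's own argument.)
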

\begin{proof}[Proof of Proposition \ref{dcpropdense}]
In the proof of this proposition, our main task is to show that \[X_{0}:=\left\{f\in L^{p}(X,\mathcal{B},\mu): \lim_{n \in A} \|T_{\varphi}^{n}f\| = 0\right\}\] is dense in \(L^{p}(X,\mathcal{B},\mu)\). Once proven, by applying Proposition 9 in \cite{Bernardes2013}, the set of all vectors with orbits distributionally near to 0 is residual.

Now we show that $X_{0}$ is dense in $L^{p}(X,\mathcal{B},\mu).$

At first, let $G$ be the set of of all measurable simple functions on \( X \) with
\[
\mu\left(\{x : g(x) \neq 0\}\right) < \infty.\]

From Theorem 3.13 \cite{Rudin1987}, \( G \) is dense in \( L^{p}(X,\mathcal{B},\mu) \). For any $f \in L^{p}(X,\mathcal{B},\mu) $ and any finite subset $S\subset \Nb$, define a new function $f_{S}\in L^{p}(X,\mathcal{B},\mu) $ as follows
\begin{equation*}
f_{S}(x):=\left\{
\begin{aligned}
&f(x),            &x\in \{x : f(x) \neq 0\}\cap (\cup_{i\in S}C_{i}),\\
&0,         &\text{otherwise}.
\end{aligned} \right.
\end{equation*}

It is not hard to check that $G':=\left\{g_{S}:g\in G,  S \text{ is a finite subset of } \Nb\right\}$ is dense in $L^{p}(X,\mathcal{B},\mu)$.

Since any function in $G$ is bounded, then for any given $g_{S}\in G'$, denote $M :=\sup\left\{|g_{S}(x)|:x\in X\right\}<\infty$. Hence we have that
\begin{equation}\label{dse1}
\Vert T_{\varphi}^{n}g_{S}\Vert \leq M\sum_{i\in S}\left(\mu\left(\varphi^{-n}\left(C_{i}\right)\right)\right)^{1/p},n\in \Nb.
\end{equation}

From the condition given in the proposition, it is easy to see that 
\begin{equation}\label{dse2}
\lim\limits_{n\in A}\sum_{i\in S}\left(\mu\left(\varphi^{-n}\left(C_{i}\right)\right) \right)^{1/p}=\sum_{i\in S}\lim\limits_{n\in A}\left(\mu\left(\varphi^{-n}\left(C_{i}\right)\right) \right)^{1/p}=0
\end{equation}

Then we have that \[\lim\limits_{n\in A}\Vert T_{\varphi}^{n}g_{S}\Vert =0 .\] 

Hence, $G'\subset X_{0}$ and $X_{0}$ is dense in $L^{p}(X,\mathcal{B},\mu)$. From Proposition 9 in \cite{Bernardes2013}, we conclude that the set of all vectors with orbits distributionally near to 0 is residual in $L^{p}(X,\mathcal{B},\mu)$. 

Next we show that \((i)\Leftrightarrow (ii)\Leftrightarrow (iii)\Leftrightarrow (iv)\).

The implications \((i)\Rightarrow (ii)\Rightarrow (iii)\) are trival. The equivalence \((iii)\Leftrightarrow (iv)\) follows from the proof of Theorem \ref{dcsufandnec}, and we shall not repeat the proof here. It remains to prove the implication \((iii)\Rightarrow (i)\) in the theorem.

\((iii)\Rightarrow (i)\): By Proposition 8 in \cite{Bernardes2013}, the existence of distributionally unbounded orbit implies that the set of vectors with distributionally unbounded orbit is residual in \(L^{p}(X,\mathcal{B},\mu)\). Then, if the set of all vectors with orbits distributionally near to 0 is residual, (iii) implies that the set of all distributionally irregular vectors is residual and that $T_{\varphi}$ is densely distributionally chaotic.
\end{proof}
\section{Distributional chaos for backward shifts and forward shifts on $\ell^{p}(\Nb,v)$ or $\ell^{p}(\Zb,v)$}
In this section, we will provide characterizations on the weight sequence for distributionally chaotic backward shifts on weighted $\ell^{p}$-spaces $\ell^{p}(\Nb,v)$ and $\ell^{p}(\Zb,v)$, and forward shifts on  $\ell^{p}(\Zb,v)$. 

Recall that weighted $\ell^{p}$-spaces $\ell^{p}(\Zb,v)$ and $\ell^{p}(\Nb,v)$ are defined as follows:
\[
\ell^p(\Zb,v) = \left\{ (x_n)_{n \in \Zb} : \Vert x \Vert = \sum_{n\in\Zb}|x_n|^p v_n < \infty \right\}, \quad 1 \leq p < \infty,
\]
and 
\[
\ell^p(\Nb,v) = \left\{ (x_n)_{n \in \Nb} : \Vert x \Vert = \sum_{n\geq 1}|x_n|^p v_n < \infty \right\}, \quad 1 \leq p < \infty,
\]
where $v = (v_n)$ is a positive weight sequence. The backward shift $B$ on the weighted sequence spaces $\ell^{p}(\mathbb{Z},v)$ and $\ell^{p}(\mathbb{N},v)$  is defined as follows:
\[
B((x_n)_{n \in \mathbb{Z}}) = (x_{n+1})_{n \in \mathbb{Z}}\ (B((x_n)_{n \in \mathbb{N}}) = (x_{n+1})_{n \in \mathbb{N}}).
\] 

We point out here that $\ell^{p}(\mathbb{Z}, v)$ is essentially the $L^{p}$-space, $L^{p}(\mathbb{Z}, \mathcal{P}(\mathbb{Z}), \mu)$, studied in this paper, where $X=\mathbb{Z}$, $\mathcal{B}=\mathcal{P}(\mathbb{Z})$ (the power set of $\mathbb{Z}$), $\mu(\{j\})=v_{j},j\in \Zb$. And the backward shift $B$ on the sequence space $\ell^{p}(\mathbb{Z},v)$ is the composition operator $T_{\sigma}$ on $L^{p}(\mathbb{Z},\mathcal{P}(\mathbb{Z}),\mu)$, with the inducing mapping $\sigma:i\in \mathbb{Z}\mapsto i+1\in \mathbb{Z}$. Similarly, the space $\ell^p(\mathbb{N}, v)$ is equivalent to $L^p(\mathbb{N}, \mathcal{P}(\mathbb{N}), \mu)$, and the backward shift operator on $\ell^p(\mathbb{N}, v)$ corresponds to the composition operator $T_{\sigma}$ on $L^p(\mathbb{N}, \mathcal{P}(\mathbb{N}), \mu)$ with the inducing mapping $\sigma:i\in \mathbb{N}\mapsto i+1\in \mathbb{N}$.

The boundedness of the operator $B$ or the composition operator $T_{\sigma}$ on $\ell^{p}(\Zb,v)$ ($\ell^{p}(\Nb,v)$) is equivalent to the condition that \[
\sup_{n \in \mathbb{Z}} \frac{v_n}{v_{n+1}} < \infty \ (\sup_{n \in \mathbb{N}} \frac{v_n}{v_{n+1}} < \infty, \text{ respectively }).
\]

The weighted backward shift operator on the sequence space $\ell^{p}(\mathbb{Z})$ ($\ell^{p}(\mathbb{N})$) is defined as follows:
\[
B_{\omega}((x_n)_{n \in \mathbb{Z}}) = (w_{n+1}x_{n+1})_{n \in \mathbb{Z}}\ (B_{\omega}((x_n)_{n \in \mathbb{N}}) = (w_{n+1}x_{n+1})_{n \in \mathbb{N}}),
\]
where \( w = (w_n)_{n \in \mathbb{Z}} \) (\( w = (w_n)_{n \in \mathbb{N}} \), respectively) is a weight sequence, that is, a sequence of bounded nonzero scalars.  The boundedness of $\omega$ is to ensure that the operator $B_{\omega}$ is bounded.  
Notably, the weighted backward shift $B_{\omega}$  on $\ell^{p}(\mathbb{Z})$ ($\ell^{p}(\mathbb{N})$)  can be identified with the backward shift \(B\) on $\ell^{p}(\mathbb{Z},v)$ ($\ell^{p}(\mathbb{N},v)$)  where 
\[
v_n =
\begin{cases}
 \prod_{i=1}^n |w_i |^{-p}, & \text{for } n \geq 1, \\
1, & \text{for } n = 0, \\
\prod_{i=n+1}^0 |w_i|^{p}, & \text{for } n \leq -1,
\end{cases}
\]

\[
\left(v_n=\prod_{i=1}^n |w_i| ^{-p},  \text{for } n \geq 1, \text{respectively.}\right)
\]

Since the weighted backward shift operator on \(\ell^p\) space is topologically equivalent to the backward shift operator on weighted \(\ell^p\) space, all conclusions about the backward shift operator on weighted \(\ell^p\) space in this section also hold correspondingly for the weighted backward shift operator on \(\ell^p\) space. 
\subsection{Distributionally chaotic backward shifts on $\ell^{p}(\Nb,v)$ }
\begin{theorem}\label{bisufnec}
Suppose that the backward shift $B$ is an operator on $\ell^{p}(\mathbb{N}, v)$.  Then $B$ is distributionally chaotic if and only if it is densely distributionally chaotic, if and only if there exist  \( \varepsilon > 0 \), an increasing sequence $(N_k)$ in $\mathbb{N}$, countable finite subsets $\left\{S_{k}\right\}_{k\in \Nb}$ of $\Nb$ and countable complex numbers $(C_{k,j})_{k\in \Nb,j\in S_{k}}$ such that
    \[
    \text{card}\left\{1 \leq n \leq N_k :\frac{\sum_{j\in S_{k}}|C_{k,j}|v_{-n+j}}{\sum_{j\in S_{k}}|C_{k,j}|v_{j}}\geq k\right\} \geq N_k \varepsilon.
    \]
\end{theorem}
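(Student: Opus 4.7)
The plan is to deduce Theorem \ref{bisufnec} from Proposition \ref{dcpropdense} after verifying its hypothesis, and then to translate condition (ii) of Theorem \ref{dcsufandnec} into the explicit weight-sequence form given here.

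\emph{Step 1: verifying the hypothesis of Proposition \ref{dcpropdense}.} As pointed out in the preamble of this section, $B$ on $\ell^{p}(\Nb,v)$ is the composition operator $T_\sigma$ on $L^{p}(\Nb,\mathcal{P}(\Nb),\mu)$ with $\sigma(i)=i+1$ and $\mu(\{i\})=v_i$. I would take $A=\Nb$ and $C_i=\{i\}$ for each $i\in\Nb$; then $\overline{\text{dens}}(A)=1$ and $\mu\bigl(\Nb\setminus\bigcup_i C_i\bigr)=0$, while $\sigma^{-n}(\{i\})=\{i-n\}$ for $i>n$ and $\sigma^{-n}(\{i\})=\emptyset$ otherwise, so $\mu(\sigma^{-n}(C_i))=0$ for every $n\ge i$. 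Hence $\lim_{n\in A}\mu(\sigma^{-n}(C_i))=0$ and Proposition \ref{dcpropdense} applies, immediately giving: $B$ is distributionally chaotic $\Longleftrightarrow$ $B$ is densely distributionally chaotic $\Longleftrightarrow$ condition (ii) of Theorem \ref{dcsufandnec} holds for $T_\sigma$.

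\emph{Step 2: translating condition (ii) to the weight sequence.} The implication ``condition of Theorem \ref{bisufnec} $\Rightarrow$ condition (ii) of Theorem \ref{dcsufandnec}'' is immediate by taking $B_{k,j}=\{j\}$, since $\mu(\{j\})=v_j$ and $\mu(\sigma^{-n}(\{j\}))=v_{j-n}$ (with the natural convention $v_m=0$ for $m\le 0$). For the converse, given disjoint measurable $B_{k,j}\subset\Nb$ and nonzero $C_{k,j}$ satisfying (ii), I would disaggregate: set $\tilde{S}_k:=\bigsqcup_{j\in S_k}B_{k,j}$ and $\tilde{C}_{k,i}:=|C_{k,j}|$ whenever $i\in B_{k,j}$; swapping the order of summation then yields
\[
\frac{\sum_{j\in S_k}|C_{k,j}|\,\mu(\sigma^{-n}(B_{k,j}))}{\sum_{j\in S_k}|C_{k,j}|\,\mu(B_{k,j})}=\frac{\sum_{i\in\tilde{S}_k}\tilde{C}_{k,i}\,v_{i-n}}{\sum_{i\in\tilde{S}_k}\tilde{C}_{k,i}\,v_i}.
\]
If $\tilde{S}_k$ is finite this is already the required form. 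If $\tilde{S}_k$ is infinite, let $E_k:=\{n\in[1,N_k]:\text{the displayed ratio}\ge k\}$ (so $|E_k|\ge N_k\varepsilon$) and choose a finite $S_k'\subset\tilde{S}_k$ large enough that $\sum_{i\in S_k'}\tilde{C}_{k,i}v_i\ge(1-\tfrac{1}{2k})\sum_{i\in\tilde{S}_k}\tilde{C}_{k,i}v_i$ and, simultaneously, $\sum_{i\in S_k'}\tilde{C}_{k,i}v_{i-n}\ge(1-\tfrac{1}{2k})\sum_{i\in\tilde{S}_k}\tilde{C}_{k,i}v_{i-n}$ for every $n\in E_k$. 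Only $|E_k|+1\le N_k+1$ absolutely convergent sums need to be controlled (convergence of the numerators uses $\mu(\sigma^{-n}(\cdot))\le c^{n}\mu(\cdot)$ from boundedness of $T_\sigma$), so such a truncation exists. The truncated ratio is then $\ge(1-\tfrac{1}{2k})k\ge k/2$ on $E_k$, and a reindexing $k\mapsto 2k$ with $N'_{k}:=N_{2k}$ delivers the condition of Theorem \ref{bisufnec}.

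The main technical obstacle I expect is precisely this truncation step: one must simultaneously shrink $\tilde{S}_k$ so that all $|E_k|+1$ relevant sums remain well-approximated from below by their partial sums on $S_k'$, without allowing the ratio to drop below a fixed multiple of $k$. This is resolved by the absolute convergence of each sum and by absorbing the factor $1/2$ into a reindexing of $k$. Everything else, in particular the equivalence with densely distributional chaos, follows directly from Step 1 and Proposition \ref{dcpropdense}.
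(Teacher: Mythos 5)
Your argument is correct and follows what the paper evidently intends (the paper states this theorem without an explicit proof, deriving such shift results from Theorem \ref{dcsufandnec} together with Proposition \ref{dcpropdense}, exactly your Step 1 with $C_i=\{i\}$, $A=\Nb$, where condition (i) is automatic because $\mu(\sigma^{-n}(\{i\}))=v_{i-n}=0$ for $n\ge i$). Your Step 2 translation, including the disaggregation of the level sets $B_{k,j}$ and the finite truncation with the $(1-\tfrac{1}{2k})$ lower bound on the numerators (the denominator needs no control, since truncating it only increases the ratio) followed by reindexing $k\mapsto 2k$, is sound; it supplies the one detail the paper glosses over, which could alternatively be avoided by approximating the distributionally irregular vector by a finitely supported simple function in the necessity part of Theorem \ref{dcsufandnec}.
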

Next we give a sufficient condition for the backward shift $B$ is an operator on $\ell^{p}(\mathbb{N}, v)$ to be distributionally chaotic, i.e., densely distributionally chaotic.
\begin{theorem}\label{ubsuf}
Suppose that the backward shift $B$ is an operator on $\ell^{p}(\mathbb{N}, v)$.  Then $B$ is distributionally chaotic if and only if it is densely distributionally chaotic, if there exist \( \varepsilon > 0 \), an increasing sequence $(N_k)$ in $\mathbb{N}$, countable finite subsets $\left\{S_{k}\right\}_{k\in \Nb}$ of $\Nb$ such that
\begin{equation}\label{ubsufequa}
    \text{card}\left\{1 \leq n \leq N_k :\frac{\sum_{j\in S_{k}}v_{-n+j}}{\sum_{j\in S_{k}}v_{j}}\geq k\right\} \geq N_k \varepsilon.
\end{equation}
\end{theorem}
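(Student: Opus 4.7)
The plan is to derive Theorem \ref{ubsuf} as a direct specialization of Theorem \ref{bisufnec}. The hypothesis (\ref{ubsufequa}) is structurally identical to the condition appearing in Theorem \ref{bisufnec}, differing only in that all weights $C_{k,j}$ have been set to unity. So the strategy is simply to feed the hypothesis of Theorem \ref{ubsuf} into Theorem \ref{bisufnec} with the particular choice $C_{k,j} = 1$ for every $k \in \mathbb{N}$ and every $j \in S_k$, and then quote the conclusion.

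Concretely, I would first check that with $C_{k,j} = 1$ the numerator $\sum_{j \in S_k} |C_{k,j}|\, v_{-n+j}$ reduces to $\sum_{j \in S_k} v_{-n+j}$ and the denominator $\sum_{j \in S_k} |C_{k,j}|\, v_j$ reduces to $\sum_{j \in S_k} v_j$, so that the cardinality condition in Theorem \ref{bisufnec}(ii) becomes exactly (\ref{ubsufequa}). Granting (\ref{ubsufequa}), the increasing sequence $(N_k)$, the finite sets $\{S_k\}_{k \in \mathbb{N}}$, and the constants $C_{k,j} = 1$ then witness the hypothesis of Theorem \ref{bisufnec}. Invoking that theorem, I conclude both that $B$ is distributionally chaotic and that distributional chaos and densely distributional chaos coincide for $B$ on $\ell^p(\mathbb{N},v)$, which is precisely the statement of Theorem \ref{ubsuf}.

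There is no genuine obstacle here: the content of Theorem \ref{ubsuf} is exactly the statement that the unweighted cardinality estimate (\ref{ubsufequa}) is already sufficient to trigger the characterization established in Theorem \ref{bisufnec}, and the proof is essentially a one-line reduction. The result plays the same auxiliary role relative to Theorem \ref{bisufnec} that Proposition \ref{dcsuf} plays relative to Theorem \ref{dcsufandnec}, namely, it provides a hypothesis which is easier to verify in concrete examples.
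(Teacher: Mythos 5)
Your reduction is correct and is essentially the paper's own (implicit) argument: the paper states Theorem \ref{ubsuf} without a separate proof precisely because it is the specialization $C_{k,j}=1$ of the characterization in Theorem \ref{bisufnec}, exactly parallel to how Proposition \ref{dcsuf} is obtained from Theorem \ref{dcsufandnec}. Nothing further is needed beyond noting, as you do, that with unit constants the weighted cardinality condition coincides with (\ref{ubsufequa}).
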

\begin{remark}
We remark that for the backward shift $B$ on  $\ell^{p}(\mathbb{N},v)$, the condition in this theorem is  strictly weaker (see Example \ref{exabac} below) than that in  Theorem 20 \cite{Bernardes2013}, which states that,  $B$ is distributionally chaotic if there exists a set \( S \subseteq \mathbb{N} \) with \( \overline{\text{dens}}(S) =1 \) such that
\[
\sum_{n \in S} v_{n} <\infty.
\]
\end{remark}
In Theorem 27 \cite{Bernardes},  the unilateral weighted backward shift $B_{\omega}$  on $\ell^{p}(\mathbb{N})$ with $1\leq p< \infty$, where $\omega_{k}:=(\frac{k}{k-1})^{1/p},k\geq 2$, was shown distributionally chaotic. Notably, this weighted backward shift $T$  on $\ell^{p}(\mathbb{N})$ can be identified with the backward shift $B$ on $\ell^{p}(\mathbb{N}, v),$ where  $v_{k}=1/k,k\in \Nb$.  Using Theorem \ref{ubsuf}, we can further illustrate, in detail, how the operator exhibits distributional chaos. 
\begin{example}\label{exabac}
Let $v_{k}=1/k,k\in \Nb$. Then the backward shift $B$ on $\ell^{p}(\mathbb{N}, v)$ is distributionally chaotic.
\end{example}
\begin{proof}
Define \( N_k = 2k (2k)^k \) and \( S_k = [(2k)^k+1, 2k (2k)^k] \cap \mathbb{N} \), for any \(k\in \Nb\). It remains to verify that inequality (\ref{ubsufequa}) in Theorem \ref{ubsuf} holds for \( (N_k) \) and \( (S_k) \). 

Specifically, we need to show that
\begin{equation}\label{n}
\frac{\sum_{j \in S_k} v_{-n+j}}{\sum_{j \in S_k} v_j} \geq k
\end{equation}
holds for all \( (2k)^k +1\leq n \leq (2k-1)(2k)^k \). 

Observe that
\[
\text{card}\left\{ (2k)^k+1 \leq n \leq (2k-1)(2k)^k \right\} = N_k \left( 1 - k^{-1} \right).
\]
Thus, once inequality (\ref{n}) is established, it directly follows that inequality (\ref{ubsufequa}) is also proved.

For \((2k)^k+1 \leq n \leq (2k-1)(2k)^k\), we can see that \(v_{-n+j}=0,\) if \(j\leq n\) and \(v_{-n+j}=1/(j-n),\) if \(j>n\). Hence we have that 
\[
\sum_{j\in S_{k}}v_{-n+j}=\sum_{j=n+1}^{2k(2k)^k}1/(j-n)=\sum_{j=1}^{2k(2k)^k-n}1/j\geq \sum_{j=1}^{(2k)^k}1/j\geq k\ln 2k.
\]

Note that, \[
\sum_{j\in S_{k}}v_{j}=\sum_{j=(2k)^k+1}^{2k(2k)^k}1/j \leq  \ln(2k(2k)^k)-\ln ((2k)^k)=\ln2k.
\]

Now we can deduce that inequality (\ref{n}) holds and complete the proof.
\end{proof}
\begin{remark}\label{remarklv}
We remark that the above example satisfies the condition in Theorem \ref{ubsuf}, but there exists no set \( S \subseteq \mathbb{N} \) with \( \overline{\text{dens}}(S) =1 \) such that
\[
\sum_{n \in S} v_{n} <\infty.
\]

\end{remark}

\begin{corollary}\label{coruws1}
Let $\omega=(\omega_{n})_{n\in \mathbb{N}}$ be a bounded sequence of nonzero scalars. The (unilateral) weighted backward shift $B_{\omega}$ on  $\ell^{p}(\mathbb{N})$ is distributionally chaotic if and only it is densely distributionally chaotic, if there exist \( \varepsilon > 0 \), an increasing sequence $(N_k)$ in $\mathbb{N}$, countable finite subsets $\left\{S_{k}\right\}_{k\in \Nb}$ of $\Nb$ such that
\begin{equation}
    \text{card}\left\{1 \leq n \leq N_k :\frac{\sum_{j\in S_{k}}|w_{1}w_{2}\cdots w_{j-n}|^{-p}}{\sum_{j\in S_{k}}|w_{1}w_{2}\cdots w_{j}|^{-p}}\geq k\right\} \geq N_k  \varepsilon.
\end{equation}
\end{corollary}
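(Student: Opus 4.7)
The plan is to transfer the result about the unweighted backward shift $B$ on $\ell^{p}(\mathbb{N},v)$ from Theorem~\ref{ubsuf} to the weighted backward shift $B_{\omega}$ on $\ell^{p}(\mathbb{N})$ via the standard conjugacy already recalled in the preamble of this section.

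First I would invoke the topological equivalence: the weighted shift $B_{\omega}$ on $\ell^{p}(\mathbb{N})$ is conjugate to the unweighted backward shift $B$ on $\ell^{p}(\mathbb{N},v)$ with the weight sequence
\[
v_{n}=\prod_{i=1}^{n}|w_{i}|^{-p},\qquad n\geq 1.
\]
Since both distributional chaos and dense distributional chaos are preserved by topological conjugacy, the equivalence ``$B_{\omega}$ is distributionally chaotic iff $B_{\omega}$ is densely distributionally chaotic'' follows directly from Theorem~\ref{bisufnec} applied to $B$ on $\ell^{p}(\mathbb{N},v)$.

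Next I would plug this specific $v_{n}$ into the sufficient condition supplied by Theorem~\ref{ubsuf}. For $j\in S_{k}$ and $1\leq n<j$ one has
\[
v_{-n+j}=v_{j-n}=\prod_{i=1}^{j-n}|w_{i}|^{-p}=|w_{1}w_{2}\cdots w_{j-n}|^{-p},
\]
and $v_{j}=|w_{1}w_{2}\cdots w_{j}|^{-p}$. Substituting these into the ratio that appears in (\ref{ubsufequa}) yields exactly the ratio in the hypothesis of the corollary, so the condition stated is simply the translation of Theorem~\ref{ubsuf}'s hypothesis through the conjugacy. Hence Theorem~\ref{ubsuf} gives the sufficient condition for $B_{\omega}$ to be (densely) distributionally chaotic.

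There is essentially no obstacle here; the only minor point to be careful about is the indexing convention that defines $v$ only for $n\geq 1$, so one should remark that terms with $j-n\leq 0$ correspond to coordinates lying outside $\mathbb{N}$ (equivalently, these contributions vanish in the conjugate picture because $B$ is unilateral), which is consistent with the proof of Example~\ref{exabac} where the same convention $v_{m}=0$ for $m\leq 0$ was used. Once this identification is in place, the corollary is immediate.
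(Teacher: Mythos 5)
Your proposal is correct and is exactly the paper's (implicit) argument: the corollary is obtained by identifying $B_{\omega}$ on $\ell^{p}(\mathbb{N})$ with the backward shift $B$ on $\ell^{p}(\mathbb{N},v)$, $v_{n}=\prod_{i=1}^{n}|w_{i}|^{-p}$, as stated in the section preamble, and then substituting this $v$ into Theorem~\ref{ubsuf} (which already contains both the sufficiency and the equivalence with dense distributional chaos), with the same convention that terms with $j-n\leq 0$ vanish. No gaps.
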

\begin{remark}
We remark that for the weighted backward shift $B_{\omega}$ on  $\ell^{p}(\mathbb{N})$, the condition in this theorem is  strictly weaker than that in  Corollary 40 \cite{Bernardes2018}, which states that,  a weighted backward shift $B_{\omega}$ on  $\ell^{p}(\mathbb{N})$ is distributionally chaotic if there exists a set \( S \subseteq \mathbb{N} \) with \( \overline{\text{dens}}(S) > 0 \) such that
\begin{equation}\label{coro40}
\sum_{n \in S} \prod_{\nu=1}^{n}| \omega_\nu |^{-1}  <\infty.
\end{equation}
The example provided in Theorem 27 \cite{Bernardes} satisfies Corollary \ref{coruws1}, but there exists no set \( S \subseteq \mathbb{N} \) with \( \overline{\text{dens}}(S) >0 \) such that condition (\ref{coro40}) holds.
\end{remark}
\subsection{Distributionally chaotic backward shifts on weighted $\ell^{p}$-spaces $\ell^{p}(\mathbb{Z}, v)$. }
In this subsection,  we address the characterizations of distributionally chaotic (bilateral) backward shifts on weighted $\ell^{p}$-spaces $\ell^{p}(\mathbb{Z}, v)$, which is little known in the current literature.  More specifically, we prove that a (bilateral) backward shift $B$ on a  weighted $\ell^{p}$-space $\ell^{p}(\mathbb{Z}, v)$ is distributionally chaotic, if and only if it is densely distributionally chaotic, if and only if the weight sequence satisfies the condition in Theorem \ref{bisufnec}.

At first, let us give an important lemma which plays a key role in the proof of Theorem \ref{bi0} and Theorem \ref{bisufnec}.
\begin{lemma}\label{1}
Let \((X, \mathcal{B}, \mu)\) be a measure space, \(I\) a countable set, and \(\{B_{i}\}_{i \in I}\) a sequence of positive, measurable subsets. Then the following conditions are equivalent.
\begin{enumerate}[label=\rm{(\roman*)}]
    \item There exists a set \(A \subseteq \mathbb{N}\) with \(\overline{\mathrm{dens}}(A) = 1\) such that 
    \[
    \lim_{n \in A} \mu \left( \varphi^{-n} \left( B_{i} \right) \right) = 0 \quad \text{for all } i \in I.
    \]
    \item There exists an increasing sequence \((M_k)\) in \(\mathbb{N}\) such that, for any \(k \in \mathbb{N}\) and \(i \in I\), there exists a corresponding \(n_{k,i}\) satisfying 
    \begin{equation}\label{mj}
        \text{card} \left\{ 1 \leq n \leq M_{j} : \mu \left( \varphi^{-n} \left( B_{i} \right) \right) < k^{-1} \right\} \geq M_{j} \left( 1 - k^{-1} \right)
    \end{equation}
    for all \(j \geq n_{k,i}\).
\end{enumerate}
\end{lemma}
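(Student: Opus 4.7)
The plan is to prove each implication separately; the countability of $I$ is the key enabler for the diagonal construction in the reverse direction.

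For $(i)\Rightarrow(ii)$, I would extract the sequence $(M_{k})$ directly from the upper-density witness $A$: pick $(M_{k})$ strictly increasing in $\mathbb{N}$ with $\mathrm{card}(A\cap[1,M_{j}])/M_{j}\to 1$. Fix $k\in\mathbb{N}$ and $i\in I$. From $\lim_{n\in A}\mu(\varphi^{-n}(B_{i}))=0$ I obtain a threshold $N_{k,i}$ beyond which $\mu(\varphi^{-n}(B_{i}))<k^{-1}$ along $A$. The set on the left of (\ref{mj}) then contains $A\cap(N_{k,i},M_{j}]$, whose cardinality is at least $\mathrm{card}(A\cap[1,M_{j}])-N_{k,i}$. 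For $j$ large enough that both $\mathrm{card}(A\cap[1,M_{j}])/M_{j}\geq 1-(2k)^{-1}$ and $N_{k,i}\leq M_{j}/(2k)$ hold, this lower bound becomes $M_{j}(1-k^{-1})$; any such threshold serves as $n_{k,i}$.

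For $(ii)\Rightarrow(i)$, I would enumerate $I=\{i_{1},i_{2},\ldots\}$ and build $A$ by diagonalizing along $(M_{k})$. The crucial point is that intersecting $k$ sets of lower density $1-k^{-1}$ may collapse to nothing, so I apply $(ii)$ with the stronger parameter $k^{2}$: this gives, for each $s\leq k$, a set $F_{k,s}^{(j)}:=\{1\leq n\leq M_{j}:\mu(\varphi^{-n}(B_{i_{s}}))<k^{-2}\}$ of cardinality at least $M_{j}(1-k^{-2})$, once $j\geq\max_{s\leq k}n_{k^{2},i_{s}}$. Choose $(j_{k})$ strictly increasing satisfying this condition and also forcing $M_{j_{k-1}}/M_{j_{k}}\to 0$. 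A union bound gives $\mathrm{card}(G_{k})\geq M_{j_{k}}(1-k^{-1})$ for $G_{k}:=\bigcap_{s=1}^{k}F_{k,s}^{(j_{k})}$, and the candidate set is
\[
A:=\bigcup_{k=1}^{\infty}\bigl(G_{k}\cap(M_{j_{k-1}},M_{j_{k}}]\bigr),\qquad M_{j_{0}}:=0.
\]
The bound $\mathrm{card}(A\cap[1,M_{j_{k}}])\geq M_{j_{k}}(1-k^{-1})-M_{j_{k-1}}$ combined with the ratio condition yields $\overline{\mathrm{dens}}(A)=1$, while for fixed $s$ and $\varepsilon>0$, choosing $k_{0}\geq s$ with $k_{0}^{-2}<\varepsilon$ ensures that every $n\in A$ with $n>M_{j_{k_{0}-1}}$ lies in some $G_{k}$ with $k\geq k_{0}$ and hence satisfies $\mu(\varphi^{-n}(B_{i_{s}}))<k^{-2}\leq\varepsilon$.

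I expect the main obstacle to be precisely this diagonal step in $(ii)\Rightarrow(i)$: one must simultaneously keep the cumulative density loss across $\{B_{i_{s}}\}_{s\leq k}$ negligible and prevent the early segments $[1,M_{j_{k-1}}]$ from eroding the density of $A$. Both difficulties are defused by the twin devices of tightening the parameter from $k$ to $k^{2}$ — so the union bound over $s\leq k$ costs only $k\cdot k^{-2}=k^{-1}$ — and imposing geometric growth on $(M_{j_{k}})$ via the ratio condition $M_{j_{k-1}}/M_{j_{k}}\to 0$.
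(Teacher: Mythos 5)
Your proposal is correct and takes essentially the same route as the paper: its $(ii)\Rightarrow(i)$ argument is the same block-diagonal construction (the paper intersects the sets $A_{m_k,2k^2,s}$, $s\le k$, over blocks $(M_{m_{k-1}},M_{m_k}]$ subject to $M_{m_{k-1}}/M_{m_k}<(2k)^{-1}$, which is exactly your tightened-parameter union bound plus a growth condition on the block endpoints). The $(i)\Rightarrow(ii)$ direction, which the paper dismisses as trivial, is the straightforward density argument you spell out, so there is nothing further to fix.
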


\begin{proof}[Proof of Lemma \ref{1}]
$(i)\Rightarrow (ii)$ is trival. We only need to show that $(ii)\Rightarrow (i).$ Since $I$ is a countable set, it can be written as $\left\{i_1,i_2,i_3 \dots\right\}.$ Without loss of generality, we assume that $I=\Nb.$ We denote by
\begin{equation}\label{Ajlk}
  A_{j,k,i}= \left\{1 \leq n \leq M_{j} :\mu \left(\varphi^{-n}\left(B_{i}\right)\right)<(k)^{-1}\right\}, j,k,i\in \Nb.
\end{equation}
Then from \ref{mj}, we have that 
\begin{equation}\label{cardAjlk}
\text{card}(A_{j,k,i})\geq M_{j}(1-(k)^{-1})
\end{equation}
 for any $j\geq n_{k,i}.$

We can define inductively sequences $(m_{k})$ in $\Nb$ and $(A_{k})$ in $\mathcal{P}(\mathbb{N})$ satisfying the following conditions:
\begin{enumerate}[label=\rm{(\arabic*)}]
\item $m_{1}=n_{1,1}$, $A_{1}=A_{1,1,1}$;
\item $ m_{k}\geq \max\{n_{2k^{2},i},1\leq i\leq k \}$ and $M_{m_{k-1}}/M_{m_{k}}<(2k)^{-1}$, for all $k\geq 2$;
\item $A_{k}\subset [M_{m_{k-1}}+1,M_{m_{k}}]$ and $\text{card}(A_{k})\geq M_{m_{k}}\cdot (1-k^{-1})$, for all $k\geq 2$;
\item $\mu \left(\varphi^{-n}\left(B_{i}\right)\right)<(k)^{-1},1\leq i\leq k,n\in A_{k},k\in \Nb$
\end{enumerate} 
Once the construction is complete, define \( A = \bigcup_{k} A_{k} \). It follows that 
\[
\overline{\mathrm{dens}}(A) = 1 \quad \text{and} \quad \lim_{n \in A} \mu \left( \varphi^{-n}(B_{i}) \right) = 0, \quad \text{for all } i \in \mathbb{N}.
\]
This establishes the implication \((\mathrm{ii}) \Rightarrow (\mathrm{i})\).

The construction of the sequences $(m_k)$ and $(A_k)$ proceeds inductively as follows:  
\begin{enumerate}
    \item For $k = 1$, set $m_{1} = n_{1,1}$ and $A_{1} = A_{1,1,1}$. From \ref{Ajlk} and \ref{cardAjlk}, it is clear that
\begin{equation*}
\text{card}(A_{1})\geq M_{m_{1}}(1-1^{-1})\text{ and }\mu \left(\varphi^{-n}\left(B_{1}\right)\right)<1^{-1},n\in A_{1}
\end{equation*}
    \item For $k = 2$, choose $m_{2} \geq \max\{n_{8,1}, n_{8,2}\}$ such that 
    \[
    \frac{M_{m_{1}}}{M_{m_{2}}} < 4^{-1}.
    \]
    Define the set 
    \[
    A_{2} = A_{m_{2},8,1} \cap A_{m_{2},8,2} \cap [M_{m_{1}} + 1, M_{m_{2}}].
    \]
From \ref{mj} and \ref{Ajlk}, for any $n\in A_{2}$, we have that
\[
\mu \left(\varphi^{-n}\left(B_{i}\right)\right)<8^{-1}<2^{-1},i=1,2
\]
and that 
\begin{equation*}
\begin{aligned}
    \text{card}(A_{2})=\text{card}(A_{2}\cap[M_{m_{1}}+1,M_{m_{2}}])&\geq\text{card}\left\{A_{m_{2},8,1}\cap A_{m_{2},8,2}\right\}-\text{card}([1,M_{m_{1}}])\\
&\geq M_{m_{2}}(1-(8)^{-1}-(8)^{-1}-4^{-1})\\
&=M_{m_{2}}\cdot (1-2^{-1}).
\end{aligned}
\end{equation*}
    \item Assume that $m_{k-1}$ and $A_{k-1}$ have been constructed and satisfy conditions (2), (3) and (4). For $k \geq 3$, select $m_{k}$ such that 
    \[
    m_{k} \geq \max\{n_{2k^{2},1}, n_{2k^{2},2}, \dots, n_{2k^{2},k}\} \quad \text{and} \quad \frac{M_{m_{k-1}}}{M_{m_{k}}} < (2k)^{-1}.
    \]
    Define the set 
    \[
    A_{k} = A_{m_{k},2k^{2},1} \cap A_{m_{k},2k^{2},2} \cap \dots \cap A_{m_{k},2k^{2},k} \cap [M_{m_{k-1}} + 1, M_{m_{k}}].
    \]
\end{enumerate}

We now verify that $A_{k}$ satisfies the required conditions (3) and (4). Specifically, we have:
\begin{equation}
\begin{aligned}
    \text{card}(A_{k}) &= \text{card}\left( A_{k} \cap [M_{m_{k-1}} + 1, M_{m_{k}}] \right) \\
    &\geq \text{card} \left( A_{m_{k},2k^{2},1} \cap A_{m_{k},2k^{2},2} \cap \dots \cap A_{m_{k},2k^{2},k} \right) 
    - \text{card}([1, M_{m_{k-1}}]) \\
    &\geq M_{m_{k}} \left( 1 - \frac{1}{2k^{2}} - \frac{1}{2k^{2}} - \dots - \frac{1}{2k^{2}} - \frac{1}{2k} \right) \\
    &= M_{m_{k}} \left( 1 - k^{-1} \right).
\end{aligned}
\end{equation}
and that
\[
\mu \left(\varphi^{-n}\left(B_{i}\right)\right)<(2k^{2})^{-1}<k^{-1},n\in A_{k},1\leq i\leq k.
\]

\end{proof}

\begin{theorem}\label{bi0}
Suppose that the backward shift $B$ is an operator on $X=\ell^{p}(\mathbb{Z}, v)$.  Then the following assertions are equivalent:
\begin{enumerate}[label=\rm{(\roman*)}]
\item the set of all vectors with orbits distributionally near to 0 is residual;
\item there exists a vector $x\in X$ with orbit distributionally near to 0; 
\item there exists $A \subseteq \mathbb{N}$ with $\overline{\text{dens}}(A) = 1$ such that $\lim_{n \in A} v_{-n} = 0$;
\end{enumerate}
\end{theorem}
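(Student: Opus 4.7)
The plan is to establish the cycle $(i) \Rightarrow (ii) \Rightarrow (iii) \Rightarrow (i)$. The implication $(i) \Rightarrow (ii)$ is immediate since any residual set in a nonempty Baire space is nonempty. For $(ii) \Rightarrow (iii)$, I would argue as follows: suppose $x = (x_j)_{j \in \mathbb{Z}} \neq 0$ has orbit distributionally near to $0$, so there is $A \subseteq \mathbb{N}$ with $\overline{\text{dens}}(A) = 1$ and $\lim_{n \in A} \|B^n x\| = 0$. Writing $\|B^n x\|^p = \sum_{k \in \mathbb{Z}} |x_k|^p v_{k-n}$ and picking any $k_0$ with $x_{k_0} \neq 0$, the inequality $|x_{k_0}|^p v_{k_0 - n} \leq \|B^n x\|^p$ forces $\lim_{n \in A} v_{k_0 - n} = 0$. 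A translation by $k_0$ then produces the set $A' := (A - k_0) \cap \mathbb{N}$, which still has upper density $1$ and satisfies $\lim_{m \in A'} v_{-m} = 0$, yielding $(iii)$.

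For $(iii) \Rightarrow (i)$, let $X_0 \subseteq X$ denote the set of vectors with orbits distributionally near to $0$. By Proposition~9 of \cite{Bernardes2013}, $X_0$ is residual as soon as it is dense, so it suffices to show that every finitely supported vector lies in $X_0$. If $x$ is supported in $[-M, M]$, then $\|B^n x\|^p = \sum_{k = -M}^{M} |x_k|^p v_{k-n}$ is a finite sum, so it tends to $0$ along a common set $A^*$ as soon as each of its summands does. Consequently the crucial step is to produce a \emph{single} set $A^* \subseteq \mathbb{N}$ with $\overline{\text{dens}}(A^*) = 1$ such that $\lim_{n \in A^*} v_{k-n} = 0$ for every $k \in \mathbb{Z}$.

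This uniform set is precisely what Lemma~\ref{1} is designed to deliver. I would apply the lemma with $I = \mathbb{Z}$ and $B_i := \{i\}$, so that $\mu(\sigma^{-n}(B_i)) = v_{i-n}$. Starting from the set $A$ supplied by $(iii)$, choose an increasing sequence $(M_j)$ with $|A \cap [1, M_j]|/M_j \to 1$, and, for each $k \in \mathbb{N}$, let $N_k$ be such that $v_{-m} < 1/k$ for all $m \in A$ with $m \geq N_k$. For any fixed $i \in \mathbb{Z}$, the set $\{1 \leq n \leq M_j : v_{i-n} < 1/k\}$ contains the translate $(A \cap [\max(N_k, 1-i), M_j - i]) + i$, whose cardinality is at least $(1 - \epsilon_j) M_j - i - N_k$; for $j$ sufficiently large in terms of $k$ and $i$, this bound exceeds $M_j(1 - 1/k)$, verifying hypothesis $(ii)$ of Lemma~\ref{1}. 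The lemma returns the desired $A^*$, and substituting into the finite-sum expression for $\|B^n x\|^p$ completes the argument.

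The main obstacle is the simultaneous control of $v_{k-n}$ for all $k \in \mathbb{Z}$ along a common set of upper density $1$: assumption $(iii)$ only gives $v_{-n} \to 0$ along $A$, and naively reusing $A$ fails because shifting the index by $k$ destroys membership in $A$. Lemma~\ref{1} is the correct device to convert per-shift density-$1$ behavior into a single uniform set; the only technical point is to produce one sequence $(M_j)$ serving all indices, and this follows from the translation-invariance of upper density together with the fact that bounded truncations are negligible in the limit.
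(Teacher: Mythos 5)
Your proposal is correct and follows essentially the same route as the paper: the only nontrivial direction $(iii)\Rightarrow(i)$ is handled, as in the paper, by applying Lemma~\ref{1} with $B_i=\{i\}$ (so $\mu(\sigma^{-n}(B_i))=v_{i-n}$) to upgrade the single density-one set from (iii) to one set $A^*$ working simultaneously for all shifts, and then concluding residuality from density of finitely supported vectors via Proposition~9 of \cite{Bernardes2013}, which is exactly what the paper's appeal to Proposition~\ref{dcpropdense} amounts to. Your explicit translation argument for $(ii)\Rightarrow(iii)$ (with $x\neq 0$) and the bookkeeping absorbing the loss of at most $|i|+N_k$ indices into the density slack match the paper's estimates, so no gaps remain.
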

\begin{proof}
The implications $(i)\Rightarrow (ii)\Rightarrow (iii)$ are trivial. We only need to show that $(iii)\Rightarrow (i).$

First, we interpret \(\ell^{p}(\mathbb{Z}, v)\) as its equivalent space \(L^{p}(\mathbb{Z}, \mathcal{P}(\mathbb{Z}), \mu)\), where \(X = \mathbb{Z}\), \(\mathcal{B} = \mathcal{P}(\mathbb{Z})\), and \(\mu(\{j\}) = v_{j}\) for all \(j \in \mathbb{Z}\). Letting $C_{i}=\{i\},i\in \Zb$, it remains to verify that $(C_{i})$ satisfies the assumptions of Proposition \ref{dcpropdense}.
The condition stated in  Proposition \ref{dcpropdense}, \(\mu\left(X \backslash \bigcup_{i \in I} C_{i}\right) = 0\), is self-evident. Our task now is to identify a set \( A \subseteq \mathbb{N} \) with \(\overline{\mathrm{dens}}(A) = 1\) such that  
\[
\lim_{n \in A} \mu \left( \varphi^{-n}(\{i\}) \right) = 0 \quad \text{for all } i \in \mathbb{Z}.
\]

Notice that, $\mu \left( \varphi^{-n} \left( \{0\} \right) \right)=v_{-n}$ and $\mu \left( \varphi^{-n} \left( \{i\} \right) \right)=v_{-n+i}$, for all $n\in \Nb,i\in \Zb$.
From Theorem \ref{1}, we only need to find an increasing sequence \((M_k)\) in \(\mathbb{N}\) such that, for any \(k \in \mathbb{N}\) and \(i \in \Zb\), there exists a corresponding \(n_{k,i}\) satisfying 
\begin{equation}\label{wbmj}
        \text{card} \left\{ 1 \leq n \leq M_{j} : v_{-n+i} < k^{-1} \right\} \geq M_{j} \left( 1 - k^{-1} \right)
\end{equation}
for all \(j \geq n_{k,i}\).
Since there exists $A \subseteq \mathbb{N}$ with $\overline{\text{dens}}(A) = 1$ such that $\lim_{n \in A} \mu \left( \varphi^{-n} \left( \{0\} \right) \right)=\lim_{n \in A} v_{-n} = 0$. Then there exists an increasing sequence \((N_k)\) in \(\mathbb{N}\) such that,
for any \(k \in \mathbb{N}\) , there exists a corresponding \(n_{k}\) satisfying 
\begin{equation*}
        \text{card} \left\{ 1 \leq n \leq N_{j} : v_{-n} < k^{-1} \right\} \geq N_{j} \left( 1 - k^{-1} \right)
\end{equation*}
for all \(j \geq n_{k}\).

Now we claim that \((N_k)\) is the desired \((M_k)\) satisfying inequality (\ref{wbmj}). 
For any $i\in \Zb$ and $k\in \Nb$, take $n_{k,i}\geq n_{k(k+1)}$ so that $n_{k,i}\cdot (k+1)^{-1}\geq |i|$.
Then we have that
\begin{equation*}
\begin{aligned}
\text{card} \left\{ 1 \leq n \leq N_{j} :v_{-n+i}< (k(k+1))^{-1} \right\} 
&\geq \text{card} \left\{ 1 \leq n \leq N_{j} : v_{-n} <(k(k+1))^{-1} \right\}-|i|\\
&\geq N_{j} \left( 1 - (k(k+1))^{-1} \right)-|i|\\
&\geq N_{j} \left( 1 - k^{-1} \right),
\end{aligned}
\end{equation*}
for any $j\geq n_{k,i}.$ Therefore, inequality (\ref{wbmj}) follows immediately.

\end{proof}
From Theorem \ref{bi0},  we can derive the equivalence between distributional chaos and densely distributional chaos for backward shifts on $\ell^{p}(\mathbb{Z},v)$ spaces and weighted backward shift $B_{\omega}$ on $\ell^{p}(\mathbb{Z})$ without any additional conditions.
\begin{corollary}
Suppose that $T$ is the backward shift $B$ on $\ell^{p}(\mathbb{Z}, v)$ or weighted backward shift $B_{\omega}$ on $\ell^{p}(\mathbb{Z})$ and that $T$ is bounded. Then $T$ is distributionally chaotic if and only it is densely distributionally chaotic.
\end{corollary}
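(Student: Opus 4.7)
The plan is to combine Theorem~\ref{bi0} with the general characterization of dense distributional chaos on separable Banach spaces proved earlier in Section~2 (the theorem immediately preceding Proposition~\ref{dcpropdense}). The implication \emph{densely distributionally chaotic $\Rightarrow$ distributionally chaotic} is immediate from the definitions, so I focus on the converse.

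Consider first the backward shift $B$ on $\ell^{p}(\mathbb{Z}, v)$. Assuming that $B$ is distributionally chaotic, the equivalence of distributional chaos with the existence of a distributionally irregular vector (the lemma recalled after Definition~\ref{DCC}) supplies some $x \in \ell^{p}(\mathbb{Z}, v)$ whose orbit is simultaneously distributionally near to $0$ and distributionally unbounded. In particular, assertion (ii) of Theorem~\ref{bi0} is satisfied, and therefore so is (i): the set of vectors with orbits distributionally near to $0$ is residual, and hence dense, in $\ell^{p}(\mathbb{Z}, v)$. Moreover, the same $x$ witnesses that $B$ admits a distributionally unbounded orbit. The implication (iii)~$\Rightarrow$~(i) in the earlier separable Banach space characterization then yields that $B$ is densely distributionally chaotic.

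For the weighted backward shift $B_{\omega}$ on $\ell^{p}(\mathbb{Z})$, the conclusion transfers via the topological conjugacy between $B_{\omega}$ on $\ell^{p}(\mathbb{Z})$ and the unweighted backward shift $B$ on $\ell^{p}(\mathbb{Z}, v)$, with the weight $v$ built from $(\omega_n)$ as described at the opening of Section~3. Since both distributional chaos and dense distributional chaos are preserved under topological conjugacy, the equivalence just obtained for $B$ carries over verbatim to $B_{\omega}$.

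There is essentially no obstacle here, as all the machinery has already been prepared. The decisive ingredient is the strong conclusion of Theorem~\ref{bi0}: a single vector with orbit distributionally near to $0$ (freely available from any distributionally irregular vector) is bootstrapped to \emph{residuality}, and therefore density, of the entire set of vectors with orbits distributionally near to $0$. This is exactly the density hypothesis required by the separable-Banach-space criterion, which is why the equivalence holds with no extra assumption on $v$ or on $\omega$.
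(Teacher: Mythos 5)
Your proposal is correct and follows essentially the same route as the paper: obtain a distributionally irregular vector from distributional chaos, use Theorem~\ref{bi0} to upgrade the single orbit distributionally near to $0$ to a residual (hence dense) set, combine this with the distributionally unbounded orbit via the Section~2 characterization (equivalently, Propositions~8--9 of Bernardes et al.), and transfer to $B_{\omega}$ on $\ell^{p}(\mathbb{Z})$ by the conjugacy with $B$ on $\ell^{p}(\mathbb{Z},v)$. The only cosmetic difference is that you invoke the implication (iii)$\Rightarrow$(i) directly, while the paper phrases the final step through residuality of the set of distributionally irregular vectors.
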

\begin{proof}
Since a weighted backward shift $B_{\omega}$ on $\ell^{p}(\mathbb{Z})$ can be identified with a backward shift $B$ is on $\ell^{p}(\mathbb{Z}, v)$, it remains to show the equivalence between distributional chaos and densely distributional chaos for backward shifts on $\ell^{p}(\mathbb{Z},v)$ spaces.

Let $T$ be a distributional chaotic backward shift $B$ on $\ell^{p}(\mathbb{Z}, v)$. By Theorem 12 \cite{Bernardes2013}, \(T\) has a distributionally irregular vector. From Theorem \ref{bi0}, the existence of vectors with orbits distributionally near to 0, implies that the set of all vectors with orbits distributionally near to 0 is residual in \(L^{p}(X,\mathcal{B},\mu)\). By Proposition 8 in \cite{Bernardes2013}, the existence of distributionally unbounded orbit implies that the set of vectors with distributionally unbounded orbit is residual in \(L^{p}(X,\mathcal{B},\mu)\). Therefore, we conclude that 

In the proof of this proposition, our main task is to show that \[X_{0}:=\left\{f\in L^{p}(X,\mathcal{B},\mu): \lim_{n \in A} \|T_{\varphi}^{n}f\| = 0\right\}\] is dense in \(L^{p}(X,\mathcal{B},\mu)\). Once proven, by applying Proposition 9 in \cite{Bernardes2013}, the set of all vectors with orbits distributionally near to 0 is residual. 
By Proposition 8 in \cite{Bernardes2013}, the existence of distributionally unbounded orbit implies that the set of vectors with distributionally unbounded orbit is residual in \(L^{p}(X,\mathcal{B},\mu)\). Then, if the set of all vectors with orbits distributionally near to 0 is residual, (iii) implies that the set of all distributionally irregular vectors is residual and that $T_{\varphi}$ is densely distributionally chaotic.

\end{proof}
From Theorem \ref{bi0}, we can obtain the following characterization.
\begin{theorem}\label{bisufnec}
Suppose that the backward shift $B$ is an operator on $\ell^{p}(\mathbb{Z}, v)$.  Then $B$ is distributionally chaotic if and only it is densely distributionally chaotic, if and only if there exist countable finite subsets $\left\{S_{i}\right\}_{i\in \Nb}$ of $\Zb$ satisfying the following conditions:
\begin{enumerate}[label=\rm{(\roman*)}]
\item there exists $A \subseteq \mathbb{N}$ with $\overline{\text{dens}}(A) = 1$ such that $\lim_{n \in A} v_{-n} = 0$;
\item there exist \( \varepsilon > 0 \) and an increasing sequence $(N_k)$ in $\mathbb{N}$ such that
    \[
    \text{card}\left\{1 \leq n \leq N_k :\frac{\sum_{j\in S_{k}}|C_{j}|v_{-n+j}}{\sum_{j\in S_{k}}|C_{j}|v_{j}}\geq k\right\} \geq N_k  \varepsilon,
    \]
for some $C_{j}\in\mathbb{C}.$
\end{enumerate}
\end{theorem}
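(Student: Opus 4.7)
The strategy is to reduce the entire chain of equivalences to Proposition \ref{dcpropdense}, using Theorem \ref{bi0} to furnish its hypothesis from condition (i). Throughout, I identify $\ell^{p}(\mathbb{Z},v)$ with $L^{p}(\mathbb{Z},\mathcal{P}(\mathbb{Z}),\mu)$ via $\mu(\{j\})=v_{j}$, so that $B = T_{\sigma}$ with $\sigma(i)=i+1$ and $\mu\bigl(\varphi^{-n}(\{j\})\bigr) = v_{-n+j}$. Since densely distributional chaos trivially implies distributional chaos, only $\mathrm{DC} \Rightarrow (\text{i})+(\text{ii})$ and $(\text{i})+(\text{ii}) \Rightarrow \text{densely DC}$ require work.

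For $(\text{i})+(\text{ii}) \Rightarrow \text{densely DC}$, I would apply Proposition \ref{dcpropdense} with $C_{i} := \{i\}$ for $i \in \mathbb{Z}$. The requirement $\mu\bigl(X \setminus \bigcup_{i} C_{i}\bigr) = 0$ is automatic, while the remaining hypothesis demands a single set $A$ of upper density one with $\lim_{n \in A} v_{-n+i} = 0$ holding simultaneously for every $i \in \mathbb{Z}$. Condition (i) supplies this only for $i=0$, but the implication (iii) $\Rightarrow$ (i) of Theorem \ref{bi0} (which itself relies on Lemma \ref{1}) promotes the pointwise statement to the required uniform one. With the hypothesis of Proposition \ref{dcpropdense} secured, condition (ii) of the present theorem coincides with condition (iv) of that proposition under the identification $B_{k,j} = \{j\}$ and $C_{k,j} = C_{j}$ for $j \in S_{k}$, and the equivalence (iv) $\Leftrightarrow$ (i) there delivers densely distributional chaos.

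For $\mathrm{DC} \Rightarrow (\text{i})+(\text{ii})$, the distributionally irregular vector guaranteed by Theorem 12 of \cite{Bernardes2013} has orbit distributionally near to $0$; by (ii) $\Rightarrow$ (iii) of Theorem \ref{bi0}, condition (i) of the present theorem holds. As in the previous paragraph this immediately secures the hypothesis of Proposition \ref{dcpropdense}, so the implication (ii) $\Rightarrow$ (iv) there, transcribed via the singleton identification, produces condition (ii).

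The main technical obstacle is the singleton reduction needed to pass between condition (ii) of Theorem \ref{dcsufandnec}, formulated with arbitrary pairwise disjoint positive-measure sets, and the present formulation in which $B_{k,j}$ must be a singleton $\{j\}$ indexed by $j \in S_{k} \subset \mathbb{Z}$. Since the ratio $\sum_{j} |C_{k,j}|\, \mu\bigl(\varphi^{-n}(B_{k,j})\bigr) / \sum_{j} |C_{k,j}|\, \mu(B_{k,j})$ is additive when each $B_{k,j}$ is split into its singleton points (each inheriting the same weight $|C_{k,j}|$), numerator and denominator decompose identically and the reduction is routine bookkeeping; every remaining step is a direct invocation of Theorem \ref{bi0}, Theorem \ref{dcsufandnec}, or Proposition \ref{dcpropdense}.
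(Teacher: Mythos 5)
Your overall route is the same one the paper intends: use Theorem \ref{bi0} (via Lemma \ref{1}) to promote condition (i) to the hypothesis of Proposition \ref{dcpropdense} with $C_{i}=\{i\}$, and then shuttle between the statement's condition (ii) and condition (ii) of Theorem \ref{dcsufandnec}. The direction $(\text{i})+(\text{ii})\Rightarrow$ densely DC is fine as you describe it, since taking $B_{k,j}=\{j\}$ makes condition (ii) a literal instance of condition (iv) of Proposition \ref{dcpropdense}. The implication DC $\Rightarrow$ (i) via a distributionally irregular vector and Theorem \ref{bi0} is also correct.

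The gap is in your last paragraph, in the direction DC $\Rightarrow$ (ii). The sets $B_{k,j}$ produced by the necessity part of Theorem \ref{dcsufandnec} (equivalently by (ii)$\Rightarrow$(iv) of Proposition \ref{dcpropdense}) are arbitrary subsets of $\mathbb{Z}$ of finite $\mu$-measure; in $\ell^{p}(\mathbb{Z},v)$ finite measure does not mean finite cardinality. Splitting each $B_{k,j}$ into its singleton points does preserve the ratio by additivity, but it produces a possibly \emph{infinite} index set, whereas the statement requires each $S_{k}\subset\mathbb{Z}$ to be finite; so "routine bookkeeping" does not close this step. The fix is a genuine (if short) truncation argument: for each fixed $k$ only the finitely many $n\le N_{k}$ matter, so one may either approximate the distributionally irregular vector by a \emph{finitely supported} sequence $s$ with $|s|\le|f|$ (finitely supported sequences are dense, and closeness can be arranged simultaneously for $n\le N_{k}$), or truncate each infinite $B_{k,j}$ to a finite subset so that the loss in the numerator $\sum_{j}|C_{k,j}|\,\mu(\varphi^{-n}(B_{k,j}))$ is at most the (fixed) denominator for every relevant $n$, which keeps the ratio $\ge k-1$ and then a reindexing in $k$ restores the stated form. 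With that step supplied, your argument matches the paper's.
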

Next we give a sufficient condition for the backward shift $B$ on $\ell^{p}(\mathbb{Z}, v)$ to be distributionally chaotic, equivalently, densely distributionally chaotic.
\begin{theorem}\label{bisuf}
Suppose that the backward shift $B$ is an operator on $\ell^{p}(\mathbb{Z}, v)$.  Then $B$ is distributionally chaotic if and only it is densely distributionally chaotic, if there exist countable finite subsets $\left\{S_{i}\right\}_{i\in \Nb}$ of $\Zb$ satisfying the following conditions:
\begin{enumerate}[label=\rm{(\roman*)}]
\item there exists $A \subseteq \mathbb{N}$ with $\overline{\text{dens}}(A) = 1$ such that $\lim_{n \in A} v_{-n} = 0$;
\item there exist \( \varepsilon > 0 \) and an increasing sequence $(N_k)$ in $\mathbb{N}$ such that
    \[
    \text{card}\left\{1 \leq n \leq N_k :\frac{\sum_{j\in S_{k}}v_{-n+j}}{\sum_{j\in S_{k}}v_{j}}\geq k\right\} \geq N_k  \varepsilon.
    \]

\end{enumerate}
\end{theorem}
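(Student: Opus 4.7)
The plan is to derive this as a direct specialization of Theorem~\ref{bisufnec}. Hypothesis~(i) here is identical to hypothesis~(i) of Theorem~\ref{bisufnec}. For hypothesis~(ii), the coefficient-free ratio $\sum_{j\in S_{k}}v_{-n+j}/\sum_{j\in S_{k}}v_{j}$ appearing here is precisely the instance of the weighted ratio $\sum_{j\in S_{k}}|C_{j}|v_{-n+j}/\sum_{j\in S_{k}}|C_{j}|v_{j}$ in Theorem~\ref{bisufnec}(ii) obtained by setting $C_{j}=1$ for every $j\in S_{k}$. Consequently, hypothesis~(ii) implies the corresponding condition of Theorem~\ref{bisufnec} with the same $\varepsilon$ and the same increasing sequence $(N_{k})$, and both hypotheses of Theorem~\ref{bisufnec} are therefore satisfied.

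Invoking Theorem~\ref{bisufnec} then yields that $B$ is distributionally chaotic and, simultaneously, densely distributionally chaotic. The equivalence \emph{distributionally chaotic if and only if densely distributionally chaotic} is part of the conclusion of Theorem~\ref{bisufnec} and holds unconditionally for bounded backward shifts on $\ell^{p}(\mathbb{Z},v)$, so no additional argument is needed to recover this biconditional under the (stronger) hypotheses of the present theorem. I emphasise that the statement asserts only a sufficient condition followed by an iff between the two notions of chaos; no converse of the form ``distributional chaos yields finite sets $S_{k}$ with unit coefficients satisfying (ii)'' is claimed or required. Thus there is no separate necessity direction to check.

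An alternative, slightly more self-contained route passes through Proposition~\ref{dcpropdense}: interpret $\ell^{p}(\mathbb{Z},v)$ as $L^{p}(\mathbb{Z},\mathcal{P}(\mathbb{Z}),\mu)$ with $\mu(\{j\})=v_{j}$, take $C_{i}=\{i\}$ for $i\in\mathbb{Z}$ so that $\mu(X\setminus\bigcup_{i}C_{i})=0$, and use Theorem~\ref{bi0} (which in turn rests on Lemma~\ref{1}) to upgrade hypothesis~(i) to the joint conclusion $\lim_{n\in A}\mu(\varphi^{-n}(C_{i}))=\lim_{n\in A}v_{-n+i}=0$ for every $i\in\mathbb{Z}$. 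Hypothesis~(ii) then supplies the distributionally unbounded orbit (via the vector $\sum_{j\in S_{k}}\mathcal{X}_{\{j\}}$ suitably normalised, exactly as in the sufficiency argument of Theorem~\ref{dcsufandnec}), and Proposition~\ref{dcpropdense} delivers dense distributional chaos.

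Either way, the only substantive observation is that the coefficient-free ratio in hypothesis~(ii) is a particular case of the weighted ratio in Theorem~\ref{bisufnec}(ii); no genuine obstacle arises, and I would expect the written proof to consist of a single short paragraph reducing the claim to Theorem~\ref{bisufnec}.
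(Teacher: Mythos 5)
Your reduction is exactly the intended argument: the paper gives no separate proof of Theorem~\ref{bisuf}, treating it as the specialization of Theorem~\ref{bisufnec} obtained by taking $C_{j}=1$ for all $j\in S_{k}$, precisely as the paper does explicitly when deriving Proposition~\ref{dcsuf} from Theorem~\ref{dcsufandnec}. Your observation that the biconditional between distributional chaos and dense distributional chaos holds unconditionally for bounded backward shifts on $\ell^{p}(\mathbb{Z},v)$ is also correct, so the proposal is complete and matches the paper's approach.
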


\begin{example}\label{exabibac}
Define \(n_{k}=k^{k},N_{k}=kk^{k},k\geq 1, N_{0}=0\) and 
\[
v_n =
\begin{cases}
1/n, & \text{if } n \geq 1, \\
1, & \text{if } n = 0, \\
1, & \text{if } N_{k-1} < -n\leq n_{k} \text{ for some }k\geq 1,\\
1/k, & \text{if }n_{k}+1 < -n\leq (k-1)n_{k} \text{ for some } k\geq 2.
\end{cases}
\] Then the backward shift $B$ on $\ell^{p}(\mathbb{N}, v)$ is densely distributionally chaotic.
\end{example}
\begin{proof}
Since \(v_n =1/n\), from the proof in Example \ref{exabac}, we can verify Condition (ii) in Theorem \ref{bisuf} holds. Define $A=\bigcup_{k\geq 1}[n_{k}+1,(k-1)n_{k}]$. It is not hard to see that $\overline{\text{dens}}(A) = 1$ and $\lim_{n \in A} v_{-n} = 0$.
\end{proof}
\subsection{Distributionally chaotic forward shift on $\ell^{p}(\mathbb{Z}, v)$}
In this subsection, we will provide characterizations on the weight sequence for the distributionally chaotic  forward shift on $\ell^{p}(\Zb,v)$ in Theorem \ref{uniforsufnec},  from which we reveal the equivalence between distributional chaos and densely distributional chaos for forward shifts on $\ell^{p}(\mathbb{Z},v)$ spaces without any additional conditions. As results, we also derive sufficient conditions for distributionally chaotic forward shifts on $\ell^{p}(\mathbb{Z},v)$ and weighted forward shifts on $\ell^{p}(\Zb)$ spaces.

Recall that the forward shift $F$ on the sequence space $\ell^{p}(\mathbb{Z},v)$  is defined as follows:
\[
F((x_{n})_{n \in \mathbb{Z}}) = (x_{n-1})_{n \in \mathbb{Z}}.
\] 

As in the initial discussion of the backward shift operator at the beginning of this section, the forward shift operator $F$ on the sequence space $\ell^{p}(\mathbb{Z},v)$, is indeed the composition operator $T_{\sigma}$ on $L^{p}(\mathbb{Z},\mathcal{P}(\mathbb{Z}),\mu)$, with the inducing mapping $\sigma:i\in \mathbb{Z}\mapsto i-1\in \mathbb{Z}$, where $\mu(\{j\})=v_{j},j\in \Zb$. 
The boundedness of the operator $F$ or the composition operator $T_{\sigma}$ on $\ell^{p}(\Zb,v)$ ($\ell^{p}(\Nb,v)$) is equivalent to the condition that \[
\sup_{n \in \mathbb{Z}} \frac{v_{n+1}}{v_{n}} < \infty .
\]

The weighted forward shift operator \(F_{\omega}\) on the sequence space $\ell^{p}(\mathbb{Z})$ is defined as follows:
\[
F_{\omega}((x_n)_{n \in \mathbb{Z}}) = (w_{n}x_{n-1})_{n \in \mathbb{Z}},
\]
where \( w = (w_n)_{n \in \mathbb{Z}} \)  is a weight sequence, that is, a sequence of bounded nonzero scalars.  
The weighted forward shift $F_{\omega}$  on $\ell^{p}(\mathbb{Z})$  can be identified with the forward shift \(F\) on $\ell^{p}(\mathbb{Z},v)$ ($\ell^{p}(\mathbb{N},v)$),  where 
\[
v_n =
\begin{cases}
 \prod_{i=1}^n |w_i |^{p}, & \text{for } n \geq 1, \\
1, & \text{for } n = 0, \\
\prod_{i=n+1}^0 |w_i|^{-p}, & \text{for } n \leq -1.
\end{cases}
\]

In Theorem 29 \cite{Bernardes2013}, the authors provide a sufficient condition under which the weighted forward shift $F_{\omega}$ on a Fréchet sequence space over \( \mathbb{Z} \) in which \( (e_n)_{n \in \mathbb{Z}} \) is a basis is distributionally chaotic if and only if it is densely distributionally chaotic. For the $\ell^{p}$-space $\ell^{p}(\mathbb{Z},v)$, this condition can be simplied as follows:
\begin{equation}\label{eqtheorem29}
\lim_{n \to \infty} \left( \prod_{\nu=1}^{n} \omega_\nu \right)  = 0. 
\end{equation}
However, condition (\ref{eqtheorem29}) is very strict for general weighted forward shift on $\ell^{p}(\mathbb{Z},v)$ space. In fact, we can show the equivalence between distributional chaos and densely distributional chaos for forward shifts on $\ell^{p}(\mathbb{Z},v)$ spaces and weighted forward shifts on $\ell^{p}(\mathbb{Z})$ in Theorem \ref{uniforsufnec} without any additional conditions.
\begin{theorem}\label{uniforsufnec}
Suppose that the forward shift $F$ is an operator on $\ell^{p}(\mathbb{N}, v)$.  Then $F$ is distributionally chaotic if and only it is densely distributionally chaotic, if and only if there exist countable finite subsets $\left\{S_{i}\right\}_{i\in \Nb}$ of $\Nb$ satisfying the following conditions:
\begin{enumerate}[label=\rm{(\roman*)}]
\item there exists $A \subseteq \mathbb{N}$ with $\overline{\text{dens}}(A) = 1$ such that $\lim_{n \in A} v_{n} = 0$;
\item there exist \( \varepsilon > 0 \) and an increasing sequence $(N_k)$ in $\mathbb{N}$ such that
    \[
    \text{card}\left\{1 \leq n \leq N_k :\frac{\sum_{j\in S_{k}}|C_{j}|v_{n+j}}{\sum_{j\in S_{k}}|C_{j}|v_{j}}\geq k\right\} \geq N_k \varepsilon,
    \]
for some $C_{j}\in\mathbb{C}.$
\end{enumerate}
\end{theorem}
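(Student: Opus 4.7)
The plan is to mirror the arguments used for Theorem \ref{bi0} and Theorem \ref{bisufnec}, exploiting the fact that the forward shift $F$ on $\ell^{p}(\Zb,v)$ realises itself as the composition operator $T_{\sigma}$ with $\sigma(i)=i-1$ on $L^{p}(\Zb,\mathcal{P}(\Zb),\mu)$, where $\mu(\{j\})=v_{j}$. The key computation is $\mu\bigl(\sigma^{-n}(\{j\})\bigr)=v_{n+j}$, which is the mirror image of the backward-shift computation $\mu\bigl(\sigma^{-n}(\{j\})\bigr)=v_{-n+j}$. Once this identification is in place, conditions (i) and (ii) here are precisely the forward-translation analogues of the hypotheses of Theorem \ref{bi0} and Theorem \ref{bisufnec}, so every step of those proofs carries over after replacing $-n$ by $+n$.

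For the implication from (i)\&(ii) to dense distributional chaos, I would apply Proposition \ref{dcpropdense} with the countable family $C_{i}=\{i\}$, $i\in\Zb$, for which $\mu\bigl(X\setminus\bigcup_{i}C_{i}\bigr)=0$ is immediate. The hypothesis of Proposition \ref{dcpropdense} requires a single set $A\subseteq\Nb$ with $\overline{\mathrm{dens}}(A)=1$ along which $v_{n+i}\to 0$ for every $i\in\Zb$. Starting from condition (i), which supplies this for $i=0$, I would invoke Lemma \ref{1}: translate the Lemma \ref{1}-(ii) condition for $v_{n}$ to $v_{n+i}$ by the estimate
\[
\mathrm{card}\{1\leq n\leq N_{j}:v_{n+i}<k^{-1}\}\;\geq\;\mathrm{card}\{1\leq n\leq N_{j}:v_{n}<k^{-1}\}-|i|,
\]
and choose $n_{k,i}$ large enough that $|i|/N_{n_{k,i}}$ is negligible compared with $k^{-1}$, exactly as in the proof of Theorem \ref{bi0}. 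Then Lemma \ref{1} produces the desired uniform $A$, Proposition \ref{dcpropdense} supplies the equivalences (i)--(iv) of that proposition, and in particular densely distributional chaos is equivalent to condition (ii) of Theorem \ref{dcsufandnec}. The latter, when specialised to atomic singletons $B_{k,j}=\{j\}$, reads exactly as condition (ii) of the present theorem.

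For the reverse direction, assume $F$ is distributionally chaotic. Condition (ii) of Theorem \ref{dcsufandnec} gives disjoint positive measurable sets $B_{k,j}$; since the measure is purely atomic with atoms at the singletons, I can refine each $B_{k,j}$ to a single atom $\{j\}$ without loss of generality, and the displayed inequality then becomes condition (ii) of the present theorem (with $C_{k,j}$ inherited from Theorem \ref{dcsufandnec}). For condition (i), use that $F$ has a distributionally irregular vector $x$ (Theorem 12 of \cite{Bernardes2013}). Pick any $k_{0}$ in the support of $x$; since $\|F^{n}x\|^{p}=\sum_{k}|x_{k}|^{p}v_{n+k}\geq|x_{k_{0}}|^{p}v_{n+k_{0}}$ and $\|F^{n}x\|\to 0$ along some $A$ with $\overline{\mathrm{dens}}(A)=1$, we have $v_{n+k_{0}}\to 0$ along $A$. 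Setting $A'=A+k_{0}$, which still has upper density $1$, we obtain $\lim_{n\in A'}v_{n}=0$, i.e.\ condition (i).

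The main obstacle is the passage from a single distributionally null orbit (or condition (i) at one index) to the uniform set $A$ required to apply Proposition \ref{dcpropdense} to all translates simultaneously; this is handled by the diagonal construction of Lemma \ref{1}, which is precisely what made the bilateral backward-shift argument of Theorem \ref{bi0} work and which transfers verbatim, up to the sign of the translation. The residuality of vectors with orbit distributionally near to $0$ then follows, and combined with the trivial existence of a distributionally unbounded orbit (guaranteed by (ii) of Theorem \ref{dcsufandnec} via Proposition 8 of \cite{Bernardes2013}), yields dense distributional chaos, closing the three-way equivalence.
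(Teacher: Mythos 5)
Your proposal is essentially the argument the paper intends: the theorem is stated without proof as the forward-shift analogue of Theorems \ref{bi0} and \ref{bisufnec}, and you correctly reassemble it from the same ingredients — the identification of $F$ with the composition operator $T_{\sigma}$, $\sigma(i)=i-1$, on the atomic $L^{p}$-space with $\mu(\{j\})=v_{j}$, the computation $\mu(\sigma^{-n}(\{j\}))=v_{n+j}$, the diagonal Lemma \ref{1} to upgrade condition (i) to a single density-one set along which $v_{n+i}\to 0$ for all $i$ simultaneously, Proposition \ref{dcpropdense} with $C_{i}=\{i\}$, and Theorem \ref{dcsufandnec} to close the cycle. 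Two minor points should be tightened. First, in the necessity of condition (ii) you cannot simply ``refine each $B_{k,j}$ to a single atom without loss of generality'': the level sets $B_{k,j}$ produced by Theorem \ref{dcsufandnec} have finite measure but may contain infinitely many atoms, whereas the statement requires each $S_{k}$ to be finite; you must either approximate the distributionally irregular vector by a \emph{finitely supported} simple function from the start, or truncate each $B_{k,j}$ and control the loss in the numerator $\mu(\sigma^{-n}(B_{k,j}))$ for $n\leq N_{k}$ via the boundedness constant $c$ of $T_{\sigma}$ (the loss is at most $c^{N_{k}}\delta$). Second, you argue throughout on $\ell^{p}(\Zb,v)$ while the statement as printed concerns $\ell^{p}(\Nb,v)$; the norm identity $\Vert F^{n}x\Vert^{p}=\sum_{m}|x_{m}|^{p}v_{m+n}$ is the same in both cases so the argument transfers, but the unilateral forward shift is not literally a composition operator on $\Nb$ (its first coordinate is set to $0$), so the appeal to Theorem \ref{dcsufandnec} and Proposition \ref{dcpropdense} should either be made in the bilateral setting of Theorem \ref{biforsufnec} or be accompanied by a remark justifying the unilateral case.
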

\begin{theorem}\label{biforsufnec}
Suppose that the forward shift $F$ is an operator on $\ell^{p}(\mathbb{Z}, v)$.  Then $F$ is distributionally chaotic if and only it is densely distributionally chaotic, if and only if there exist countable finite subsets $\left\{S_{i}\right\}_{i\in \Nb}$ of $\Zb$ satisfying the following conditions:
\begin{enumerate}[label=\rm{(\roman*)}]
\item there exists $A \subseteq \mathbb{N}$ with $\overline{\text{dens}}(A) = 1$ such that $\lim_{n \in A} v_{n} = 0$;
\item there exist \( \varepsilon > 0 \) and an increasing sequence $(N_k)$ in $\mathbb{N}$ such that
    \[
    \text{card}\left\{1 \leq n \leq N_k :\frac{\sum_{j\in S_{k}}|C_{j}|v_{n+j}}{\sum_{j\in S_{k}}|C_{j}|v_{j}}\geq k\right\} \geq N_k \varepsilon,
    \]
for some $C_{j}\in\mathbb{C}.$
\end{enumerate}
\end{theorem}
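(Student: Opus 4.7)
The plan is to transport the entire backward-shift machinery (Theorem \ref{bi0} and Theorem \ref{bisufnec}) to the forward shift by exploiting the observation that $F$ on $\ell^p(\mathbb{Z},v)$ is the composition operator $T_\sigma$ with $\sigma(i)=i-1$, so $\mu(\sigma^{-n}(\{j\}))=v_{n+j}$. This turns every instance of $v_{-n+j}$ that appeared in the backward case into $v_{n+j}$, and the proofs translate almost verbatim.

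\smallskip
My first step is to prove the forward-shift analogue of Theorem \ref{bi0}: the set of vectors with orbit distributionally near to $0$ is residual in $\ell^p(\mathbb{Z},v)$ if and only if there exists $A\subseteq\mathbb{N}$ with $\overline{\mathrm{dens}}(A)=1$ such that $\lim_{n\in A}v_n=0$. The implication from residuality to existence is trivial. For existence to the density-zero condition, I pick any $x$ whose orbit is distributionally near $0$, choose an index $j_0$ with $x_{j_0}\neq 0$, and use the lower bound $\|F^n x\|^p\geq |x_{j_0}|^p v_{n+j_0}$; since the shift of a set of upper density $1$ has upper density $1$, I obtain the desired $A$. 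The converse direction uses Proposition \ref{dcpropdense} with the countable family $C_i=\{i\}$ for $i\in\mathbb{Z}$, and here Lemma \ref{1} is the key tool: starting from an $A$ for $i=0$, I extract an increasing $(N_k)$ with $\mathrm{card}\{1\leq n\leq N_j : v_n<k^{-1}\}\geq N_j(1-k^{-1})$, and for an arbitrary $i\in\mathbb{Z}$ I choose $n_{k,i}$ so that the index shift by $i$ costs at most $|i|$ elements while $N_{n_{k,i}}(k+1)^{-1}\geq |i|$, exactly mirroring the shift argument in the proof of Theorem \ref{bi0}.

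\smallskip
With this analogue in hand, the proof of Theorem \ref{biforsufnec} splits cleanly. For necessity, if $F$ is distributionally chaotic, then $F$ has a distributionally irregular vector, whose orbit is distributionally near $0$; by the analogue above this forces condition (i). For condition (ii), I apply Theorem \ref{dcsufandnec} with $X=\mathbb{Z}$ and the discrete structure: the finitely supported disjoint measurable sets $B_{k,j}$ reduce to singletons in $\mathbb{Z}$, and the ratio $\sum_j|C_{k,j}|\mu(\sigma^{-n}(B_{k,j}))/\sum_j|C_{k,j}|\mu(B_{k,j})$ becomes $\sum_{j\in S_k}|C_j|v_{n+j}/\sum_{j\in S_k}|C_j|v_j$, exactly condition (ii). For sufficiency, condition (ii) supplies the DCC data of Definition \ref{DCC} by setting $y_k = z_k/(k^{1/p}\|z_k\|)$ with $z_k=\sum_{j\in S_k}C_j^{1/p}\mathcal{X}_{\{j\}}$, yielding $\|F^n y_k\|\geq 1$ on a set of density $\varepsilon$ in $[1,N_k]$; by Proposition 8 of \cite{Bernardes2013} this produces a residual set of distributionally unbounded orbits. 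Combined with condition (i) and the forward analogue of Theorem \ref{bi0}, the hypothesis of Proposition \ref{dcpropdense} is satisfied, so $F$ is densely distributionally chaotic.

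\smallskip
The equivalence between distributional chaos and densely distributional chaos for $F$ then drops out of Proposition \ref{dcpropdense} once condition (i) is established, since condition (i) is necessary (from the argument above) and, via the forward analogue of Theorem \ref{bi0}, automatically supplies the residual family with orbits distributionally near $0$ required by that proposition. The main obstacle I anticipate is the careful bookkeeping in the $i$-uniform application of Lemma \ref{1} needed for the forward analogue of Theorem \ref{bi0}, since on $\mathbb{Z}$ one must simultaneously control the shifted conditions $v_{n+i}\to 0$ for all $i\in\mathbb{Z}$ along a single $A$; the already-established shift trick from the backward case handles this, but it is the only non-mechanical step in the translation.
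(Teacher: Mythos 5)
Your proposal is correct and follows what is essentially the paper's (implicit) route: the paper prints no separate proof of Theorem \ref{biforsufnec}, leaving it as the forward analogue of the backward-shift results, and that analogue is obtained exactly as you describe, by viewing $F$ as the composition operator $T_{\sigma}$ with $\sigma(i)=i-1$, so that $\mu(\sigma^{-n}(\{j\}))=v_{n+j}$, and then running Lemma \ref{1}, the forward analogue of Theorem \ref{bi0}, Proposition \ref{dcpropdense} and Theorem \ref{dcsufandnec}. Two small points deserve an extra line in a final write-up. First, in the necessity of condition (ii), Theorem \ref{dcsufandnec} only yields pairwise disjoint sets $B_{k,j}\subset\mathbb{Z}$ of finite positive measure, which need not be singletons nor even finite sets, so they do not ``reduce to singletons'' automatically; since only the finitely many indices $1\le n\le N_k$ have to be controlled, you should truncate the approximating function to finitely many coordinates (equivalently, approximate the distributionally irregular vector by finitely supported vectors for each fixed $k$) in order to land on finite sets $S_k$ as the statement requires. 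Second, Proposition 8 of \cite{Bernardes2013} only upgrades a nonempty set of distributionally unbounded orbits to a residual one; the existence of one such orbit should be extracted from the DCC data (parts (a) and (b)), which you already have via conditions (i) and (ii), or obtained directly from the implication (iv)$\Rightarrow$(i) in Proposition \ref{dcpropdense}. With these routine repairs the argument is complete and coincides with the intended one.
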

Next we give a sufficient condition for the forward shift $F$ on $\ell^{p}(\mathbb{Z}, v)$ to be distributionally chaotic, equivalently, densely distributionally chaotic.
\begin{theorem}\label{biforsuf}
Suppose that the forward shift $F$ is an operator on $\ell^{p}(\mathbb{Z}, v)$.  Then $F$ is distributionally chaotic if and only it is densely distributionally chaotic, if there exist countable finite subsets $\left\{S_{i}\right\}_{i\in \Nb}$ of $\Zb$ satisfying the following conditions:
\begin{enumerate}[label=\rm{(\roman*)}]
\item there exists $A \subseteq \mathbb{N}$ with $\overline{\text{dens}}(A) = 1$ such that $\lim_{n \in A} v_{n} = 0$;
\item there exist \( \varepsilon > 0 \) and an increasing sequence $(N_k)$ in $\mathbb{N}$ such that
    \[
    \text{card}\left\{1 \leq n \leq N_k :\frac{\sum_{j\in S_{k}}v_{n+j}}{\sum_{j\in S_{k}}v_{j}}\geq k\right\} \geq N_k \varepsilon.
    \]
\end{enumerate}
\end{theorem}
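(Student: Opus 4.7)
The plan is to reduce the theorem to an application of Proposition \ref{dcpropdense}. First I would identify $\ell^{p}(\mathbb{Z},v)$ with $L^{p}(\mathbb{Z},\mathcal{P}(\mathbb{Z}),\mu)$, where $\mu(\{j\})=v_{j}$, so that the forward shift $F$ becomes the composition operator $T_{\sigma}$ with inducing map $\sigma:i\mapsto i-1$; in particular $\sigma^{-n}(\{j\})=\{j+n\}$, which gives $\mu(\sigma^{-n}(\{j\}))=v_{j+n}$. With this identification in hand, I would take the countable covering family $C_{i}=\{i\}$, $i\in\mathbb{Z}$; the condition $\mu(X\setminus\bigcup_{i\in\mathbb{Z}}C_{i})=0$ in Proposition \ref{dcpropdense} is immediate. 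It then remains to verify two things: (a) the existence of a single set $A\subseteq\mathbb{N}$ of upper density $1$ along which $v_{n+i}\to 0$ for every $i\in\mathbb{Z}$, and (b) condition (iv) of Proposition \ref{dcpropdense}, i.e.\ condition (ii) of Theorem \ref{dcsufandnec}.

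For task (a), hypothesis (i) gives a set $A_{0}$ with $\overline{\text{dens}}(A_{0})=1$ and $\lim_{n\in A_{0}}v_{n}=0$. For each fixed $i\in\mathbb{Z}$, translating the index by $|i|$ changes the counting function by at most $|i|$, so one obtains an increasing sequence $(N_{k})$ (independent of $i$) and, for each $k$, an integer $n_{k,i}$ such that
\[
\text{card}\{1\le n\le N_{j}:v_{n+i}<k^{-1}\}\ge N_{j}(1-k^{-1})
\]
for all $j\ge n_{k,i}$, in direct analogy with the estimate carried out in the proof of Theorem \ref{bi0}. Lemma \ref{1}, applied with the countable index set $I=\mathbb{Z}$ and $B_{i}=\{i\}$, then upgrades this family of $i$-by-$i$ controls to a single set $A$ of upper density $1$ along which $v_{n+i}\to 0$ for every $i\in\mathbb{Z}$.

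For task (b), I would set $B_{k,j}=\{j\}$ for $j\in S_{k}$ and $C_{k,j}=1$; then $\{B_{k,j}\}_{j\in S_{k}}$ is pairwise disjoint and
\[
\frac{\sum_{j\in S_{k}}|C_{k,j}|\mu(\sigma^{-n}(B_{k,j}))}{\sum_{j\in S_{k}}|C_{k,j}|\mu(B_{k,j})}=\frac{\sum_{j\in S_{k}}v_{n+j}}{\sum_{j\in S_{k}}v_{j}},
\]
so hypothesis (ii) of the present theorem is precisely condition (ii) of Theorem \ref{dcsufandnec}. Proposition \ref{dcpropdense} then yields both that $F$ is densely distributionally chaotic and, via the equivalence (i)$\Leftrightarrow$(ii) in that proposition, that distributional chaos and densely distributional chaos coincide under the standing assumptions.

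The only genuinely non-routine point is task (a): upgrading the scalar decay $\lim_{n\in A_{0}}v_{n}=0$ to a \emph{simultaneous} decay $\lim_{n\in A}v_{n+i}=0$ valid for every integer $i$ along one common density-one set $A$. This is precisely the diagonal construction packaged in Lemma \ref{1}; once that lemma is invoked, everything else is bookkeeping that adapts the indexing conventions of Theorem \ref{bi0} from the backward to the forward direction.
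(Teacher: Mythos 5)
Your proposal is correct and follows essentially the route the paper intends: the paper states Theorem \ref{biforsuf} without an explicit proof, and the argument it implicitly relies on is exactly your reduction, namely identifying $F$ with the composition operator induced by $\sigma:i\mapsto i-1$ on $L^{p}(\mathbb{Z},\mathcal{P}(\mathbb{Z}),\mu)$, using the translation-by-$|i|$ estimate together with Lemma \ref{1} (as in Theorem \ref{bi0}) to get one density-one set $A$ with $v_{n+i}\to 0$ for all $i$, and then applying Proposition \ref{dcpropdense} with $C_{i}=\{i\}$ and condition (ii) of Theorem \ref{dcsufandnec} realized by $B_{k,j}=\{j\}$, $C_{k,j}=1$. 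No gaps beyond the small threshold adjustment (the $k(k+1)$ bookkeeping) that you correctly attribute to the proof of Theorem \ref{bi0}.
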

\begin{remark}
By symmetrically reversing the components of the weight sequence \(v_n\) from Example \ref{exabibac} for \(n > 0\) and \(n < 0\) about \(n=0\) , we construct a new weighted $\ell^{p}$-space. Utilizing Theorem \ref{biforsuf}, we can demonstrate that the forward shift operator on this new weighted $\ell^{p}$-space exhibits distributional chaos.
\end{remark}

\section{Composition operators on $L^{p}(\mathbb{T},\mathcal{B},\lambda)$ induced by automorphisms of the complex unit disk $\mathbb{D}$}
Our last example is the composition operator $T_{\varphi}$ induced by an automorphism $\varphi$ of the complex unit disk $\mathbb{D}$, where $\mathcal{B}$ is the collection of all Lebesgue measurable subset of the unit circle $\mathbb{T}$ and $\lambda$ denotes the normalized Lebesgue measure on $\mathbb{T}$. Here, by automorphism we mean a biholomorphic self mapping on $\mathbb{D}$. It has been shown that $T_{\varphi}$ is topologically transitive if and only if $T_{\varphi}$ is topologically mixing if and only if $\varphi$ is an automorphism of the disk with no fixed point in $\mathbb{D}$ (Theorem 3.3 \cite{Bayart2018}). The $\mathcal{F}$-transitivity and d$\Fc$-transitivity were characterized for composition operators on $L^{p}(\mathbb{T},\mathcal{B},\lambda)$ induced by automorphisms on D in \cite{He2021}.

\begin{theorem}\label{lft}
Let $1\leq p<\infty$,  $\lambda$ denote the normalized Lebesgue measure on the unit circle $\mathbb{T}$, $\mathcal{B}$ be the collection of all Lebesgue measurable subset of  $\mathbb{T}$ and $T_{\varphi}$ be the composition operator induced by an automorphism $\varphi$ of $\mathbb{D}$ on $L^{p}(\mathbb{T},\mathcal{B},\lambda)$. Then the following assertions are equivalent:
\begin{enumerate}[label=\rm{(\roman*)}]
\item $T_{\varphi}$ is Li-Yorke chaotic;
\item $T_{\varphi}$ is distributionally chaotic;
\item $T_{\varphi}$ is densely distributionally chaotic;
\item $T_{\varphi}$ is topologically transitive;
\item $T_{\varphi}$ is topologically mixing;
\item $\varphi$ has no fixed point in $\mathbb{D}$. 
\end{enumerate}
\end{theorem}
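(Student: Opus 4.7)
My strategy is to treat the equivalences $(iv) \Leftrightarrow (v) \Leftrightarrow (vi)$ as already known from Theorem 3.3 of \cite{Bayart2018}, and to exploit the trivial implications $(iii) \Rightarrow (ii) \Rightarrow (i)$, so that the remaining work is to establish $(i) \Rightarrow (vi)$ and $(vi) \Rightarrow (iii)$; together these close all six equivalences.

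For $(i) \Rightarrow (vi)$ I argue contrapositively. Suppose $\varphi$ has a fixed point $z_0 \in \mathbb{D}$ and pick an automorphism $\psi$ of $\mathbb{D}$ with $\psi(0) = z_0$. Then $\psi^{-1} \circ \varphi \circ \psi$ fixes $0$, so it is a rotation $r_\theta(z) = e^{i\theta}z$, which yields $T_\varphi = T_{\psi^{-1}} T_{r_\theta} T_\psi$. Since $r_\theta$ preserves the normalized Lebesgue measure on $\mathbb{T}$, the operator $T_{r_\theta}$ is an isometry of $L^p(\mathbb{T})$, so setting $C := \|T_\psi\|\,\|T_{\psi^{-1}}\|$ one obtains
$$C^{-1}\|x-y\| \leq \|T_\varphi^n x - T_\varphi^n y\| \leq C\|x-y\|, \qquad n \in \mathbb{N},\ x,y \in L^p(\mathbb{T}).$$
This uniform comparison precludes any Li-Yorke pair (for which $\liminf_n \|T_\varphi^n x - T_\varphi^n y\| = 0$ and $\limsup_n \|T_\varphi^n x - T_\varphi^n y\| > 0$ would be required simultaneously), so $T_\varphi$ is not Li-Yorke chaotic.

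For $(vi) \Rightarrow (iii)$ the plan is to apply Proposition \ref{dcpropdense}. If $\varphi$ has no fixed point in $\mathbb{D}$, the classification of disk automorphisms says $\varphi$ is either parabolic with a single boundary fixed point $\alpha$, or hyperbolic with an attracting boundary fixed point $\alpha$ and a repelling one $\beta$; in either case $\varphi^n(z) \to \alpha$ for every $z \in \overline{\mathbb{D}} \setminus \{\beta\}$. Let $F \subset \mathbb{T}$ denote the set of boundary fixed points and take a countable collection of closed arcs $(C_i)_{i \in \mathbb{N}}$, each disjoint from $F$, whose union equals $\mathbb{T} \setminus F$. Then $\lambda(\mathbb{T} \setminus \bigcup_i C_i) = 0$, and for each $i$ dominated convergence gives $\lambda(\varphi^{-n}(C_i)) \to 0$ as $n \to \infty$, since $\chi_{\varphi^{-n}(C_i)}(z) \to 0$ for all $z \in \mathbb{T} \setminus \{\beta\}$. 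This verifies the hypothesis of Proposition \ref{dcpropdense} with $A = \mathbb{N}$, reducing the remaining task to exhibiting a single distributionally unbounded orbit. For that I would pick $f(z) = |z - \alpha|^{-s}$ with $s \in (0, 1/p)$, so that $f \in L^p(\mathbb{T})$, apply the change of variables
$$\|T_\varphi^n f\|^p = \int_\mathbb{T} |w-\alpha|^{-sp}\,|(\varphi^{-n})'(w)|\,d\lambda(w),$$
and observe that $|(\varphi^{-n})'|$ concentrates an exponentially (hyperbolic) or polynomially (parabolic) large amount of mass near $\alpha$, forcing $\|T_\varphi^n f\| \to \infty$.

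The principal obstacle is making the divergence $\|T_\varphi^n f\| \to \infty$ rigorous, particularly in the parabolic case where the exponential blow-up of $|(\varphi^{-n})'(\alpha)|$ is replaced by a polynomial one and must be balanced against the singularity of $f$ at $\alpha$. A convenient route is to conjugate $\varphi$ to the translation $\zeta \mapsto \zeta + 1$ on the upper half-plane (sending $\alpha$ to $\infty$), transfer $f$ and the measure $\lambda$ accordingly, and reduce the claim to a concrete integral asymptotic. Once this is in place Proposition \ref{dcpropdense} produces a dense distributionally chaotic set for $T_\varphi$, closing the chain of equivalences.
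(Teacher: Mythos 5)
Your skeleton coincides with the paper's: take $(iv)\Leftrightarrow(v)\Leftrightarrow(vi)$ from \cite{Bayart2018}, use the trivial chain $(iii)\Rightarrow(ii)\Rightarrow(i)$, and prove $(i)\Rightarrow(vi)$ and $(vi)\Rightarrow(iii)$. Your proof of $(i)\Rightarrow(vi)$ is correct and genuinely different from the paper's: the paper excludes the elliptic case by conjugating $\varphi$ to a rotation of another circle and invoking the Li--Yorke characterization of \cite{BernardesDarji}, whereas you note that $T_\varphi^{n}=T_{\psi^{-1}}T_{r_\theta}^{n}T_\psi$ with $T_{r_\theta}$ an isometry of $L^{p}(\mathbb{T},\lambda)$, so $\|T_\varphi^{n}x-T_\varphi^{n}y\|$ is squeezed between $C^{-1}\|x-y\|$ and $C\|x-y\|$, which rules out Li--Yorke pairs directly. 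This is more elementary and self-contained; the only point you should make explicit is the boundedness of $T_\psi$ and $T_{\psi^{-1}}$ on $L^{p}(\mathbb{T},\lambda)$, which holds because $|\psi'|$ is bounded above and below on $\mathbb{T}$ for a disk automorphism. Your verification of the hypothesis of Proposition \ref{dcpropdense} in $(vi)\Rightarrow(iii)$ (closed arcs avoiding the boundary fixed points, dominated convergence) is also fine and is essentially the paper's argument with uniform convergence replaced by pointwise convergence.

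The one genuine gap, which you flag yourself, is the remaining step of $(vi)\Rightarrow(iii)$: you never prove that $\|T_\varphi^{n}f\|\to\infty$ for $f(z)=|z-\alpha|^{-s}$, $0<s<1/p$, deferring it to an unexecuted half-plane computation; as written, the existence of a distributionally unbounded orbit (the only thing Proposition \ref{dcpropdense} still requires) is not established. The paper fills this slot with its longest computation: geometric growth of $\mu(\varphi^{-k}(B))$ near $\alpha$ in the hyperbolic case and a delicate derivative estimate after conjugating the parabolic map to a translation, all in order to verify condition (ii) of Theorem \ref{dcsufandnec}. But your route closes much more cheaply, and no asymptotics of $|(\varphi^{-n})'|$ or change of variables are needed: for every $z\in\mathbb{T}\setminus\{\alpha,\beta\}$ one has $\varphi^{n}(z)\to\alpha$ with $\varphi^{n}(z)\neq\alpha$, hence $|f(\varphi^{n}(z))|\to\infty$ pointwise almost everywhere, and Fatou's lemma (or uniform convergence of $\varphi^{n}$ to $\alpha$ on a fixed closed arc disjoint from $\beta$, which already bounds the integral from below) yields $\|T_\varphi^{n}f\|^{p}=\int_{\mathbb{T}}|f(\varphi^{n}(z))|^{p}\,d\lambda(z)\to\infty$ simultaneously in the hyperbolic and parabolic cases. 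Adding that observation makes your proof complete, and in this step noticeably shorter than the paper's.
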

\begin{proof}
The equivalence $(iv)\Leftrightarrow (v) \Leftrightarrow (vi)$ is from Theorem 3.3 \cite{Bayart2018}. It is sufficient to show that $(i)\Rightarrow (vi) $ and $(vi) \Rightarrow (iii)$.

$(i)\Rightarrow (vi).$ 
Since $T_{\varphi}$ is Li-Yorke chaotic, $\varphi$ cannot be the identity. Then from Proposition 4.47 \cite{Peris}, $\varphi$ must be in one of the following three cases.
\begin{enumerate}
\item $\varphi$ is parabolic. In this case, $\varphi$ has a single fixed point $z_{0}$ lying in $\mathbb{T}$, and $\varphi^{n}(z)\rightarrow z_{0}, \varphi^{-n}(z) \rightarrow z_{0}$ for all $z\in \widehat{\mathbb{C}}$.
\item $\varphi$ is elliptic. In this case, $\varphi$ has two distinct fixed points and one of which lies in $\mathbb{D}$.
\item $\varphi$ is hyperbolic. In this case, $\varphi$ has two distinct fixed points $z_{0}$ and $z_{1}$ in $\mathbb{T}$ such that $\varphi^{n}(z)\rightarrow z_{0}$ for all $z\in \widehat{\mathbb{C}}, z\neq z_{1}$ and $\varphi^{-n}(z)\rightarrow z_{1}$ for all $z\in \widehat{\mathbb{C}}, z\neq z_{0}$. 
\end{enumerate}

We only need to show that $\varphi$ cannot be elliptic. Otherwise, if $\varphi$ is elliptic, from Proposition 4.47 \cite{Peris}, there is a linear fractional transformation $\sigma$ and a unimodular complex constant $c\neq 1$ such that $\sigma(\mathbb{D})$ is a disk and $\sigma\circ \varphi\circ \sigma^{-1}(z)=cz, \forall z\in \widehat{\mathbb{C}}$. Let $\psi= \sigma\circ \varphi\circ \sigma^{-1}$, $\mathcal{B}'$ denote the collection of all Lebesgue measurable subset of the circle $\partial\sigma(\mathbb{D})$ and $\mu$ denote the normalized Lebesgue measure on the circle $\partial\sigma(\mathbb{D})$. It is not hard to check that $T_{\varphi}$ on $L^{p}(\mathbb{T},\mathcal{B},\lambda)$ is linearly topologically conjugate to the composition operator $T_{\psi}$ induced by $\psi$ on $L^{p}(\partial\sigma(\mathbb{D}),\mathcal{B}^{'},d\mu)$. Noting that $\psi$ is a rotation on the circle $\partial\sigma(\mathbb{D})$, by the characterization of Li-Yorke chaos given in Theorem 1.1 \cite{BernardesDarji}, $T_{\psi}$ is not Li-Yorke chaotic. This contradicts with that $T_{\varphi}$ is Li-Yorke chaotic.

$(vi)\Rightarrow (iii).$ The assumption that $\varphi$ has no fixed point in $\mathbb{D}$ is equivalent to that $\varphi$ is parabolic or hyperbolic.

Let $\alpha$ be the attractive fixed point and $\beta$ be the repulsive fixed point of $\varphi$ for each $1\leq j\leq m$ ($\alpha=\beta$ if $\varphi$ is parabolic). Take arbitrary $k\in \Nb$. Let $C_{k}=\mathbb{T}\backslash  I\left(\beta,\frac{1}{2k}\right),$ where $I\left(\beta,\frac{1}{2k}\right)$ denote the arc of $\mathbb{T}$ with measure $\frac{1}{2k}$ and midpoint $\beta$. It is clear that $\mu(\mathbb{T}\backslash \cup_{i\in \Nb}C_{i})=0$. Now we will show that $(C_{i})_{i\in I}$ satisfies the assumption in Proposition \ref{dcpropdense}. 

From \cite[Proposition 4.47]{Peris}, $\varphi^{n}\xrightarrow[n\rightarrow \infty]{}\alpha$ locally uniformly on $\mathbb{T}\backslash \beta$, then $\varphi^{n}\xrightarrow[n\rightarrow\infty]{}\alpha$ uniformly on $C_{i}$ for each $i\in \Nb.$ Hence $\lim\limits_{k\rightarrow\infty}\lambda\left(\varphi^{-k}(C_{i})\right)=0$ for any $i\in \Nb$ and $\varphi$ satisfies the assumption in Proposition \ref{dcpropdense}. Then the composition operator $T_{\varphi}$ is densely distributionally chaotic if and only if it is distributionally chaotic if and only if it is distributionally unbounded. Next we show that the compositon operator $T_{\varphi}$ is distributionally unbounded.

If $\varphi$ is hyperbolic, from Proposition 4.47 \cite{Peris}, there is a linear fractional transformation $\sigma:z\in\mathbb{C}\mapsto \frac{\alpha-z}{\beta-z}\in\widehat{\mathbb{C}}$ and a real number $0<\lambda<1$ such that $$\sigma\circ \varphi\circ \sigma^{-1}(z)=\lambda z, \forall z\in \widehat{\mathbb{C}}.$$ 
 
Then $\psi= \sigma\circ \varphi\circ \sigma^{-1}$ has the attractive fixed point $0$ and the repulsive fixed point $\infty$. Since $\sigma(\alpha) = 0$, $\sigma(\beta) = \infty$ and $\sigma$ is conformal as a linear fractional transformation, it follows that $\sigma(\mathbb{T})$ is a straight line passing through the origin, and $\sigma(\mathbb{D})$ is a half-plane. 

It is not difficult to compute that $\varphi'(\alpha)=(\sigma^{-1})'(0)\cdot\lambda\cdot\sigma'(\alpha)=\lambda$ and $(\varphi^{-1})'(\alpha)=1/\lambda.$
Since $(\varphi^{-1})'(z)$ is continuous on the whole complex plane, including $\alpha$, there exists some real number $\epsilon_{0}>0$, such that  $$|(\varphi^{-1})'(z)|>(1+|\lambda|)/2|\lambda|>1,$$ 
for any $z\in I(\alpha,\epsilon_{0})$. Since $\varphi^{-k}(\alpha)=\alpha,$ for any $k\in \Nb,$ then for any given $n\in \Nb,$ there exists some positive real number $\delta_{n}<\epsilon_{0}$ such that $\varphi^{-k}(I(\alpha,\delta_{n}))\subset I(\alpha,\epsilon_{0}),$ for any $0\leq k\leq n.$ Denote $M=(1+|\lambda|)/2|\lambda|>1$. Then $|\varphi^{-k})'(z)|>M>1$, for any $z\in I(\alpha,\delta_{n})$ and any $0\leq k\leq n$. It is clear that $\mu(\varphi^{-k}(I(\alpha,\delta_{n})))>M^{k}\mu(I(\alpha,\delta_{n})),$ for any $0\leq k\leq n$. Denote $B_{n}=I(\alpha,\delta_{n}),\ n\in \Nb$. Then we have that 
$$\left(\frac{\mu\left(\varphi^{-k}\left(B_{n}\right)\right)}{\mu\left(B_{n}\right)}\right)^{1/p}=\frac{M^{1/p}(1-M^{n/p})}{1-M^{1/p}}$$ 
for any $0\leq k\leq n, n\in \Nb$,
which implies that $T_{\varphi}$ is not distributionally unbounded.

If $\varphi$ is parabolic, $\varphi$ has a single fixed point $\alpha$ lying in $\mathbb{T}$, we can compute that the derivative of $\varphi'(\alpha)= 1$, which means that the proof method for hyperbolic types is no longer applicable. 

From Proposition 4.47 \cite{Peris}, there is a linear fractional transformation $\sigma:z\in\mathbb{C}\mapsto \frac{\alpha+z}{\alpha-z}\in\widehat{\mathbb{C}}$ and a real number $b\neq 0$ such that $$\psi:=\sigma\circ \varphi\circ \sigma^{-1}(z)=z+bi,\  \forall z\in \widehat{\mathbb{C}}.$$ Then $\psi= \sigma\circ \varphi\circ \sigma^{-1}$ has the attractive fixed point $\infty$. 

Denote $\alpha=e^{\theta_{\alpha}}$ where $\theta_{\alpha} \in [0,2\pi)$ and Denote $\theta(z)$ as the argument function of $z$, i.e. $z=|z|e^{\theta(z)}$, with the requirement that $\theta(z) \in [\theta_{\alpha},\theta_{\alpha}+2\pi)$. Denote the real part of the complex number $z$ as $\text{Re}(z)$ and the imaginary part as $\text{Im}(z)$. Next we give a simple but important computation:
\begin{equation}\label{sigma}
\sigma(z)=\frac{\alpha+z}{\alpha-z}=\frac{(\alpha+z)(\overline{\alpha}-\overline{z})}{(\alpha-z)(\overline{\alpha}-\overline{z})}=\frac{\text{Im} (\overline{\alpha} z) }{1-\text{Re} (\overline{\alpha} z)}i=\frac{\sin(\theta(z) - \theta(\alpha))}{1-\cos(\theta(z) - \theta(\alpha))}i,
\end{equation}
for any $z\in\mathbb{T}.$
From equation (\ref{sigma}), we have the following observations.

Observation 1: $\sigma(\mathbb{T})=\{\infty\}\cup i\mathbb{R}$,$\sigma(\alpha)=\infty$, $\sigma(-\alpha)={0}$. And $\sigma(z)={i}$ if $\theta(z)=\pi/2+\theta(\alpha).$

Observation 2: $\text{Im}(\sigma(z))\rightarrow  +\infty $ when $\theta(z) \rightarrow  \theta(\alpha)^{+}$ and  $\text{Im}(\sigma(z))\rightarrow  -\infty$ when $\theta(z) \rightarrow  (2\pi+\theta(\alpha))^{-}.$

Observation 3: $\theta(\sigma^{-1}(z))\rightarrow  \theta(\alpha)^{+} $ when $\text{Im}(z)\rightarrow  +\infty $ and  $\theta(\sigma^{-1}(z))\rightarrow  (2\pi+\theta(\alpha))^{-}$ when $\text{Im}(z)\rightarrow  -\infty.$

Observation 4: $\text{Im}(\sigma(z))<\text{Im}(\sigma(z'))$ if $\theta(z)>\theta(z')$ and $z'\neq \alpha.$ $\theta(z)>\theta(z')$ if $\text{Im}(\sigma(z))<\text{Im}(\sigma(z')).$ 

Note that $\psi^{n}(z)=z+nbi$ and $\psi^{-n}(z)=z-nbi$, for any $n\in \Nb$ and $z\in \widehat{\mathbb{C}}$. If $b>0,$  
for any given $z\in \mathbb{T},z\neq \alpha,$ as $n\rightarrow +\infty$, we have that  $$\text{Im} (\psi^{n}(\sigma(z)))\xrightarrow[\text{strictly increasing}]{}+\infty, \ \text{Im} (\psi^{-n}(\sigma(z)))\xrightarrow[\text{strictly decreasing}]{} -\infty,$$
which implies 
\begin{equation}\label{infty}
\theta(\sigma^{-1}(\psi^{n}(\sigma(z))))\xrightarrow[\text{strictly decreasing}]{}  \theta(\alpha)^{+},\ \theta(\sigma^{-1}(\psi^{-n}(\sigma(z))))\xrightarrow[\text{strictly increasing}]{}  (2\pi+\theta(\alpha))^{-},
\end{equation}
from observation 3 and observation 4.

Noting that $\sigma^{-1}\circ \psi^{n}\circ \sigma=\varphi^{n}$ and $\sigma^{-1}\circ \psi^{-n}\circ \sigma=\varphi^{-n}$ for any $n\in \Nb$ and $z\in \widehat{\mathbb{C}}$. From formula (\ref{infty}), we can see that as $n\rightarrow +\infty,$
\begin{equation}\label{monoinf}
\theta(\varphi^{n}(z))\xrightarrow[\text{strictly decreasing}]{}  \theta(\alpha)^{+},\ \theta(\varphi^{-n}(z))\xrightarrow[\text{strictly increasing}]{}  (2\pi+\theta(\alpha))^{-},
\end{equation}

Let $y=\varphi^{-1}(z)=\sigma^{-1}\circ\psi^{-1}\circ\sigma(z)$. In the following, our main goal is to estimate the derivative $\frac{dy}{dz}$.
Noting that $\sigma(y)=\psi^{-1}\cdot\sigma(z)$, from equation (\ref{sigma}), we have that
\begin{equation}
\frac{\sin(\theta(y)-\theta(\alpha)) i}{1-\cos(\theta(y)-\theta(\alpha))}=\frac{\sin(\theta(z)-\theta(\alpha)) i}{1-\cos(\theta(z)-\theta(\alpha))}-bi.
\end{equation}

Then we have that 
\begin{equation}\label{inverse}
F(z,y):=\frac{\sin(\theta(y)-\theta(\alpha)) }{1-\cos(\theta(y)-\theta(\alpha))}-\frac{\sin(\theta(z)-\theta(\alpha)) i}{1-\cos(\theta(z)-\theta(\alpha))}+bi=0.
\end{equation}
Thus, we can obtain the derivative
\begin{equation}\label{derivative}
\frac{dy}{dz}=\frac{d\theta(y)}{d\theta(z)}=-\frac{F_{\theta(z)}}{F_{\theta(y)}}=\frac{1-\cos(\theta(y) - \theta(\alpha))}{1-\cos(\theta(z) - \theta(\alpha))}.
\end{equation}
Suppose $\text{Im}(\sigma(z))\geq \left(1+\left\lceil \frac{1}{b} \right\rceil \right)b\geq b+1$, i.e., $\text{Im}(\sigma(y))\geq 1$. Note that $\text{Im}(\sigma(e^{\theta(\alpha)+\pi/2}))=1$. From the monotonicity in the latter part of formula (\ref{monoinf}), we can deduce that $\theta(y) > \theta(z)$. Then we have that $0\leq\theta(z) - \theta(\alpha)\leq\theta(y) - \theta(\alpha)<\pi/2$, which implies that $\sin(\theta(y) - \theta(\alpha))>\sin(\theta(z) - \theta(\alpha))>0.$ Hence we have that 
\begin{equation}\label{derivative2}
\frac{dy}{dz}=\frac{\frac{1}{1-\cos(\theta(z) - \theta(\alpha))}}{\frac{1}{1-\cos(\theta(y) - \theta(\alpha))}}>\frac{\frac{\sin(\theta(z) - \theta(\alpha))}{1-\cos(\theta(z) - \theta(\alpha))}}{\frac{\sin(\theta(y) - \theta(\alpha))}{1-\cos(\theta(y) - \theta(\alpha))}}=\frac{\frac{\sin(\theta(z) - \theta(\alpha))}{1-\cos(\theta(z) - \theta(\alpha))}}{\frac{\sin(\theta(z) - \theta(\alpha))}{1-\cos(\theta(z) - \theta(\alpha))}-b}=\frac{\text{Im}(\sigma(z))}{\text{Im}(\sigma(z))-b}.
\end{equation}

For any $j\geq 2+\left\lceil \frac{1}{b} \right\rceil$, denote by $T_{j}\subset \mathbb{T}$ the arc such that 
\begin{equation}\label{tj}
\text{Im}(\sigma(T_{j}))= [(j-1)b,jb).
\end{equation}
Let $B_{j}\subset T_{j}$ be any measurable subset of $T_{j}.$

Then we have that 
$$\text{Im}(\sigma^{-k}(B_{j}))\subset [(j-1-k)b,(j-k)b)$$
 for any $1\leq k\leq j-1-\left\lceil \frac{1}{b} \right\rceil.$ 
From equatin (\ref{derivative2}), we have that 
\begin{equation}
\left.\frac{dy}{dz}\right|_{z=z_{0}}=\frac{\text{Im}(\sigma(z_{0}))}{\text{Im}(\sigma(z_{0}))-b}\in \left[\frac{j-k}{j-k-1},\frac{j-k-1}{j-k-2}\right],
\end{equation}
for any $z_{0}\in\sigma^{-k}(B_{j}),1\leq k\leq j-2-\left\lceil \frac{1}{b} \right\rceil.$

Hence we have that 
\begin{equation}\label{uk}
\mu(\varphi^{-k}(B_{j}))=\int_{\varphi^{-k}(B_{j})}dy\geq \int_{\varphi^{-(k-1)}(B_{j})}\frac{j-(k-1)}{j-(k-1)-1}dz=\frac{j-k+1}{j-k}\mu(\varphi^{-(k-1)}(B_{j})),
\end{equation}
and that
\begin{equation}\label{uk}
\mu(\varphi^{-k}(B_{j}))\geq \frac{j}{j-k}\mu(B_{j}),
\end{equation}
for any $1\leq k\leq j-1-\left\lceil \frac{1}{b} \right\rceil.$ 

In particular, when $k=1$ and $k= j-1-\left\lceil \frac{1}{b} \right\rceil,$ we have that 
\begin{equation}
\mu(\varphi^{-1}(B_{j}))\geq\frac{j}{j-1}\mu(B_{j}) \text{ and }
\mu(\varphi^{-(j-1-\left\lceil \frac{1}{b} \right\rceil)}(B_{j}))\geq \frac{2+\left\lceil \frac{1}{b} \right\rceil}{1+\left\lceil \frac{1}{b} \right\rceil}\mu(B_{j}).
\end{equation}

For any given $i\in\Nb,$ let $n_{i}=2^{i},$ and take arcs $B_{j}\subset T_{j}$ with measure $\mu(B_{j})=M_{i}=\mu\left(T_{in_{i}+2+\left\lceil \frac{1}{b} \right\rceil}\right),$ for any $,j=1,2,\cdots,in_{i}+2+\left\lceil \frac{1}{b} \right\rceil.$ From equation (\ref{tj}), we have that 
$\varphi^{-k}(B_{i})\cap \varphi^{-k}(B_{i'})=\emptyset$ for any distinct pair $i,i' \in \Nb$ and any $k\in \Nb.$ Take a simple function $f_{i}=\sum_{j=n_{i}}^{in_{i}+2+\left\lceil \frac{1}{b} \right\rceil}\frac{1}{j^{1/p}}\mathcal{X}_{B_{j}}$ with that 
\begin{equation}
\Vert f_{i} \Vert^{p}= \sum_{j=n_{i}}^{in_{i}}\frac{1}{j}\mu(B_{j})=\mu\left(T_{in_{i}+2+\left\lceil \frac{1}{b} \right\rceil}\right)(\ln (k)).
\end{equation}

For any $k\in [n_{i},(i-1)n_{i})$, we have that 
\begin{align}
\Vert T_{\varphi}^{k}f_{i} \Vert^{p}&= \sum_{j=n_{i}}^{in_{i}}\frac{1}{j}\mu(\mathcal{X}_{\varphi^{-k}(B_{j})})\geq \sum_{j=k+1+\left\lceil \frac{1}{b} \right\rceil}^{in_{i}}\frac{1}{j}\cdot\frac{j}{j-k}\mu(B_{j})\\
& =\sum_{j'=1+\left\lceil \frac{1}{b} \right\rceil}^{in_{i}-k}\frac{1}{j'}\delta_{i}\geq \left(\ln(in_{i}-k)-\ln(1+\left\lceil \frac{1}{b} \right\rceil)\right)\mu\left(T_{in_{i}+2+\left\lceil \frac{1}{b} \right\rceil}\right)\\
&\geq \left(\ln (n_{k})-\ln(1+\left\lceil \frac{1}{b} \right\rceil)\right)\mu\left(T_{in_{i}+2+\left\lceil \frac{1}{b} \right\rceil}\right). \tag*{}
\end{align}
Therefore,  condition (ii) in Theorem \ref{dcsufandnec} holds and $T_{\varphi}$ is distributionally unbounded. Now we conclude that $T_{\varphi}$ is densely distributionally chaotic from Proposition \ref{dcpropdense}.

\end{proof}

\subsection*{Acknowledgements} This research was funded by the National Natural Science Foundation of China (No.  12101415, 62272313), Shenzhen Institute of Information Technology (No. SZIIT2022KJ008), Science and Technology Projects in Guangzhou (No. 2024A04J4429) and the project of promoting research capabilities for key constructed disciplines in Guangdong Province (No. 2021ZDJS028).


\begin{thebibliography}{999}
\bibitem{Bayart2018} F. Bayart, U. B. Darji and B. Pires, {\em Topological transitivity and mixing of composition operators}. J. Math. Anal. Appl. 465 (1) (2018) 125–139.
\bibitem{Bayart} F. Bayart and \'E. Matheron, {\em Dynamics of Linear Operators}. Cambridge University Press, Cambridge, 2009.
\bibitem{Bernardes2013} N. C. Bernardes Jr., A. Bonilla, V. M\"uller, A. Peris, {\em Distributional chaos for linear operators}. J. Funct. Anal. 265 (9) (2013) 2143-2163.
\bibitem{Bernardes2015} N. C. Bernardes Jr., A. Bonilla, V. M\"uller and A. Peris, {\em Li-Yorke chaos in linear dynamics}, Ergodic Theory Dynam. Systems 35 (6) (2015) 1723-1745.
\bibitem{Bernardes} N. C. Bernardes Jr., A. Bonilla, A. Peris, {\em Mean Li-Yorke chaos in Banach spaces}. J. Funct. Anal, 278 (2020) 108343.
\bibitem{Bernardes2018} N.C. Bernardes Jr., A. Bonilla, A. Peris, X. Wu, {\em Distributional chaos for operators on Banach spaces}, J. Math. Anal. Appl. 459 (2) (2018) 797-821.
\bibitem{BernardesDarji} N. C. Bernardes Jr., U. B. Darji and B. Pires, {\em Li-Yorke chaos for composition operators on $L^{p}$-spaces}. Monatshefte für Mathematik 191 (1) (2020) 13-35.
\bibitem{Bermudez2011} T. Berm\'udez, A. Bonilla, F. Mart\'inez-Gim\'enez and A. Peris, {\em Li-Yorke and distributionally chaotic operators}, J. Math. Anal. Appl. 373 (1)(2011) 83–93.
\bibitem{He2021} 
Shengnan He, Xiaoli Sun and Mingqing Xiao, {\em Characterizations of the d$\Fc$-transitive and $\Fc$-transitive Families of Composition Operators on $L^{p}$-spaces}. J. Math. Analy. Appl. 499 (2) (2021) 125069.
\bibitem{Peris} K.G. Grosse-Erdmann and A. Peris Manguillot, {\em Linear Chaos}. Springer, London, 2011.
\bibitem{Martinez2009}
F. Martínez-Giménez, P. Oprocha, and A. Peris, {\em Distributional chaos for backward shifts}, J. Math. Anal. Appl., 351(2) (2009), 607--615.

\bibitem{Mycielski} J. Mycielski, {\em Independent sets in topological algebras}, Fundam. Math. 55 (1964) 139–147.
\bibitem{Oprocha2006} P. Oprocha, {\em A quantum harmonic oscillator and strong chaos}, J. Phys. A, 39(47) (2006), 14559--14565.

\bibitem{Rudin1987} W. Rudin, {\em Real and complex analysis}. third edition, McGraw-Hill, NewYork, 1987.
\bibitem{Schweizer1994} B. Schweizer and J. Smítal, {\em Measures of chaos and a spectral decomposition of dynamical systems on the interval}, Trans. Amer. Math. Soc., 344(2) (1994), 737--754.

\end{thebibliography}
\end{document}